\DeclareMathOperator*{\esssup}{ess\,sup}
\providecommand{\U}[1]{\protect \rule{.1in}{.1in}}
\newtheorem{theorem}{Theorem}[section]
\newtheorem{definition}[theorem]{Definition}
\newtheorem{lemma}[theorem]{Lemma}
\newtheorem{remark}[theorem]{Remark}
\newenvironment{proof}[1][Proof]{\noindent \textbf{#1.} }{\  \rule{0.5em}{0.5em}}
\begin{document}

\title{Mean-field backward stochastic differential equations with mean reflection and nonlinear resistance}
\author{Peng Luo \thanks{School of Mathematical Sciences, Shanghai Jiao Tong University, Shanghai 200240, China (peng.luo@sjtu.edu.cn). Financial support
from the National Natural Science Foundation of China (Grant No. 12101400) is gratefully acknowledged.}}
\date{}

\maketitle

\begin{abstract}
The present paper is devoted to the study of the well-posedness of mean field BSDEs with mean reflection and nonlinear resistance. By the contraction mapping argument, we first prove that the mean-field BSDE with mean reflection and nonlinear resistance admits a unique deterministic flat local solution on a small time interval. Moreover, we build the global solution by introducing a two-step approach, which is a combination of stitching method and fixed point method. We further provide an application to the super-hedging problem with risk constraint. 
\end{abstract}

\textbf{Key words}:  mean-field BSDEs, mean reflection, nonlinear resistance, super-hedging.

\textbf{MSC-classification}: 60H10, 60H30, 91G10.
\section{Introduction}
The nonlinear Backward Stochastic Differential Equation (BSDE) of the following form was first introduced by Pardoux and Peng \cite{PP}:
\begin{align}\label{1}
 Y_t=\xi+\int^T_t f(s,Y_s,Z_s)ds-\int^T_tZ_sdB_s,\ \ \ \  \forall t\in[0,T],
\end{align}
whose solution  consists of  an adapted pair of processes $(Y,Z)$.
Pardoux and Peng have obtained  the existence and uniqueness theorem for the BSDE \eqref{1} when the generator $f$ is uniformly Lipschitz and
the terminal value $\xi$ is square integrable. The solvability of scalar-valued quadratic BSDEs was first established by
Kobylanski \cite{K1} via a PDE-based method under the  boundedness assumption of the terminal value.
Subsequently, Briand and Hu \cite{BH2,BH3} have extended the existence result to the case of unbounded terminal values with exponential moments and have studied the uniqueness whenever the generator is convex (or concave).

Recently, motivated by super-hedging of claims under running risk management constraints,  Briand, Elie and Hu \cite{BH} have formulated a new type of BSDE with constraints,
 which is called the BSDE with mean reflection. In their framework, the solution $Y$  is required to satisfy the following type of mean reflection constraint:
  \[
  \mathbb{E}[\ell(t,Y_t)]\geq 0, \ \ \ \forall t\in[0,T],
  \]
 where the running loss function $(\ell(t,\cdot))_{0\leq t\leq T}$ is a collection of  (possibly random) non-decreasing real-valued map. This  type  of reflected equation is also closely related to interacting particles systems,  see, e.g., Briand, Chaudru de Raynal,  Guillin and  Labart \cite{BC}. In order to establish the well-posedness of BSDEs with mean reflection, in \cite{BH} the authors have introduced the notion of deterministic flat solution and obtain the existence and uniqueness of a deterministic flat solution when the generator is Lipchitz in $(y,z)$ and terminal condition is square integrable. Hibon et al. \cite{HHLLW} further studied the case with quadratic generator and bounded terminal condition.

The main purpose of this paper is to study the following type of BSDE with mean reflection:
\begin{align*}
\begin{cases}
&Y_t=\xi+\int^T_t f(s,Y_s,\mathbb{P}_{Y_s},Z_s,\mathbb{P}_{Z_s},G_s(K))ds-\int^T_t Z_s dB_s+K_T-K_t;\\
& \ \ \ \ \ \ \ \ \ \ \ \ \ \ \ \ \ \ \mathbb{E}[\ell(t,Y_t)]\geq 0,
\end{cases} \  \  \forall t\in[0,T],
\end{align*}
where $\mathbb{P}_{Y_s}$ and $\mathbb{P}_{Z_s}$ are the laws of $Y_s$ and $Z_s$ respectively and $G(K)$ is the resistance term \footnote{Reflected BSDE with nonlinear resistance was first studied in \cite{QX}. Following their work, we call $G(K)$ the resistance term.}. This equation is called mean-field BSDE with mean reflection and nonlinear resistance. Our work is motivated by super-hedging problem with risk constraint and the study of interacting particles system. When the mean reflection constraint and resistance term vanish, it reduces to a standard mean field BSDE. For the study of mean-field BSDEs, we refer the reader to \cite{BDLP,BLP,HHT}.

As in \cite{BH,HHLLW}, we consider the existence and uniqueness of a deterministic flat solution of the above mean-field BSDE with mean reflection and nonlinear resistance. Relying on a contraction mapping argument, we first show that there exists a unique deterministic flat solution on a small time horizon for both the case where the generator is Lipschitz and terminal condition is square integrable and the case where the generator is quadratic in $Z_s$, subquadratic in $\mathbb{P}_{Z_s}$ and terminal condition is bounded. However, the nonlinear resistence term brings additional difficulty to study the global solution. To overcome this difficulty, we introduce a two-step approach which is a combination of stitching method and fixed point method. One of the main ingredients is the observation that the maximal length of the time interval on which the mapping is contractive is a fixed constant in the Lipschitz case and depends only on the bound of the component $Y$ in the quadratic case. Therefore for a given resistance term, we obtain a global solution on the whole time interval in the Lipschitz case by stitching the local ones. We further show that the component $Y$ has a uniform estimate under additional assumptions in the quadratic case. Once again, a global solution on the whole time interval in the quadratic case can be established by stitching local solutions for a given resistance term. Another main ingredient is to apply the fixed point method with respect to the resistance term. A contraction mapping could be constructed thanks to the global solution built in the first step and the solution is deterministic flat. In conclusion, there exists a unique deterministic flat solution for the mean-field BSDE with nonlinear resistance both in the Lipschitz and quadratic case.

The remainder of the paper is organized as follows. In Section 2, we first introduce mean-field BSDEs with mean reflection and nonlinear resistance, and state our main assumptions. When the generator is Lipschitz and terminal condition is square integrable, the existence and uniqueness of deterministic flat solution is first obtained. We further investigate the case when the generator is quadratic and terminal condition is bounded. An application to the super-hedging probem with risk constraint is presented in Section 3.

\subsubsection*{Notation.}
We introduce the notations, which will be used throughout this paper.   For  each Euclidian space, we  denote by $\langle\cdot,\cdot \rangle$  and  $|\cdot|$
 its scalar product and the associated norm, respectively.
Then consider  a  finite time horizon $[0,T]$ and  a complete probability space $(\Omega,\mathscr{F},\mathbb{P})$,  on which $B=(B_t)_{0\leq t\leq T}$ is a  standard $d$-dimensional Brownian motion.  Let $(\mathscr{F}_t)_{0\leq t\leq T}$ be the  natural filtration generated by $B$ augmented with the family $\mathscr{N}^\mathbb{P}$ of  $\mathbb{P}$-null sets of $\mathscr{F}$. $\mathcal{P}$ denotes the sigma algebra of progressively measurable sets of $[0,T]\times\Omega$,
   $\mathcal{B}(\mathbb{R})$ and $\mathcal{B}(\mathbb{R}^d)$ are the Borel algebras on $\mathbb{R}$ and $\mathbb{R}^d$, respectively. $\mathcal{C}_{T}(\mathbb{R})$ denotes the set $C([0,T];\mathbb{R})$ of continuous functions from $[0,T]$ to $\mathbb{R}$. For $x\in\mathbb{R}^d$, $\delta_x$ denotes the Dirac measure at $x$. 
  Finally for all $t\in[0,T]$
 we consider the following Banach spaces:
\begin{description}
\item[$\bullet$] $\mathcal{L}^{2}(\mathscr{F}_t)$ is the space of  real valued $\mathscr{F}_t$-measurable random variables $Y$
satisfying
\begin{align*}
\|Y\|_{\mathcal{L}^{2}}=\mathbb{E}[|Y|^2]^{\frac{1}{2}}<\infty;
\end{align*}
\item[$\bullet$]  $\mathcal{L}^{\infty}(\mathscr{F}_t)$  is the space of  real valued $\mathscr{F}_t$-measurable random variables $Y$
satisfying
\begin{align*}
\|Y\|_{\mathcal{L}^{\infty}}=\esssup\limits_{\omega}|Y(\omega)|<\infty;\end{align*}
\item[$\bullet$]  $\mathcal{S}^{2}$  is the space of  real valued  progressively measurable continuous  processes $Y$ satisfying
\begin{align*}
\|Y\|_{\mathcal{S}^{2}}=\mathbb{E}\left[\sup_{0\leq t\leq T}|Y_t|^2\right]^{\frac{1}{2}}<\infty;
\end{align*}
\item[$\bullet$]  $\mathcal{A}^2_D$ is the closed subset of $\mathcal{S}^{2}$ consisting of deterministic non-decreasing processes $K = (K_t)_{0\leq t\leq T}$ starting from the origin;
\item[$\bullet$]
 $\mathcal{H}^2$  is the space of  all  progressively measurable  processes $Z$ taking values in $\mathbb{R}^d$ such that
\begin{align*}
\|Z\|_{\mathcal{H}^2} =\mathbb{E}\left[\int_0^T|Z_t|^2dt\right]^{\frac{1}{2}}< \infty;
\end{align*}
\item[$\bullet$]  $\mathcal{S}^{\infty}$  is the space of  real valued  progressively measurable continuous  processes $Y$ satisfying
\begin{align*}
\|Y\|_{\mathcal{S}^{\infty}}=\esssup\limits_{(t,\omega)}|Y(t,\omega)|<\infty;
\end{align*}
\item[$\bullet$]  $\mathcal{A}^{\infty}_D$ is the closed subset of $\mathcal{S}^{\infty}$ consisting of deterministic non-decreasing processes $K = (K_t)_{0\leq t\leq T}$ starting from the origin;
\item[$\bullet$]
 $BMO$  is the space of  all  progressively measurable  processes $Z$ taking values in $\mathbb{R}^d$ such that
\begin{align*}
\|Z\|_{BMO} =\sup\limits_{\tau\in\mathcal{T}}\left\|\mathbb{E}_{\tau}\left[\int^T_{\tau}|Z_s|^2ds\right]\right\|_{\mathcal{L}^{\infty}}^{1/2} < \infty,
\end{align*}
where $\mathcal{T}$ denotes the set of all $[0,T]$-valued stopping times $\tau$
and $\mathbb{E}_{\tau}$ is the conditional expectation with respect to $\mathscr{F}_{\tau}$.
 \end{description}
 For each $Z\in BMO$, we set
 \[
  \mathscr{Exp}(Z\cdot B)_T=\exp\left(\int^T_0 Z_s dB_s-\frac{1}{2}\int^T_0|Z_s|^2ds\right),
 \]
which is a martingale  by
 \cite{K}. Thus it follows from Girsanov's theorem that \newline $(B_t-\int_{}^tZ_sds)_{0\leq t\leq T}$
is a Brownian motion under the equivalent probability measure $\mathscr{Exp} (Z\cdot B)_{T} d\mathbb{P}$.  We further denote by $\mathcal{S}^2_{[a,b]}$, $\mathcal{H}^2_{[a,b]}$, $\mathcal{A}^2_{D,[a,b]}$ $\mathcal{S}^{\infty}_{[a,b]}$, $\mathcal{A}^{\infty}_{D,[a,b]}$ and $BMO_{[a,b]}$ the corresponding spaces for the stochastic processes have time indexes on $[a,b]$. For $p\geq 1$, let $\mathcal{P}_p(\mathbb{R}^d)$ be the set of all probability measures $\mu$ on $\left(\mathbb{R}^d,\mathcal{B}(\mathbb{R}^d)\right)$ satisfying $\int_{\mathbb{R}^d}|x|^p\mu(dx)<\infty$. Moreover, $\mathcal{P}_p(\mathbb{R}^d)$ is endowed with the $p$-Wasserstein metric: for $\mu,\nu\in\mathcal{P}_p(\mathbb{R}^d)$,
\begin{equation*}
  \mathcal{W}_p\left(\mu,\nu\right)=\inf\left\{\left(\int_{\mathbb{R}^d\times\mathbb{R}^d}|x-y|^p\pi(dxdy)\big|\pi\in\mathcal{P}_2(\mathbb{R}^{2d}),\pi(\cdot\times\mathbb{R}^d)=\mu,\pi(\mathbb{R}^d\times\cdot)=\nu\right)\right\}.
\end{equation*}
\section{Mean field BSDEs with mean reflection and nonlinear resistance}
In this paper, we  consider the following type of BSDE with mean reflection:
\begin{align}\label{my1}
\begin{cases}
&Y_t=\xi+\int^T_t f(s,Y_s,\mathbb{P}_{Y_s},Z_s,\mathbb{P}_{Z_s},G_s(K))ds-\int^T_t Z_s dB_s+K_T-K_t;\\
& \ \ \ \ \ \ \ \ \ \ \ \ \ \ \ \ \ \ \mathbb{E}[\ell(t,Y_t)]\geq 0,
\end{cases} \  \  \forall t\in[0,T],
\end{align}
where $\mathbb{P}_{Y_s}$ and $\mathbb{P}_{Z_s}$ are the laws of $Y_s$ and $Z_s$ respectively and $G(K)$ is the resistance term. The above equation is called mean-field BSDE with mean reflection and nonlinear resistance. Its parameters are the terminal condition $\xi$, the generator (or driver) $f$, resistance function $G$ as well as the running loss function $\ell$. BSDE with mean reflection was first introduced in \cite{BH}, where the authors have discussed such equation under the standard Lipschitz condition on the generator and the square integrability assumption on terminal condition. Quadratic BSDE with mean reflection and bounded terminal condition was investigated in \cite{HHLLW}.

Our work is motivated by super-hedging problem with risk constraint and the study of interacting particles system. In fact, mean-field BSDE with mean reflection and nonlinear resistance \eqref{my1} arises (with a slight disction) naturally from super-hedging problem with risk constraint, we refer the reader to Section 3 for more details. When the resistance tem vanishes, \eqref{my1} can be viewed as the limiting system of particle systems interacting through both the driver and reflection (see \cite{BH0,BDLP}).

In the sequel, we  study the existence and uniqueness theorem of equation \eqref{my1}. These parameters are supposed to satisfy the following standard running assumptions:
 \begin{itemize}
 \item[($H_\xi$)] The terminal condition $\xi$ is an square-integrable  ${\mathcal{ F}}_T$-measurable random variable such that  \[\mathbb{E}[\ell(T,\xi)]\geq 0.\]
 \item[($H^q_\xi$)] The terminal condition $\xi$ is an  ${\mathcal{ F}}_T$-measurable random variable bounded by $L>0$ such that  \[\mathbb{E}[\ell(T,\xi)]\geq 0.\]
 \item[($H_f$)]  The driver $f:[0,T]\times\Omega\times\mathbb{R}\times \mathcal{P}_2(\mathbb{R})\times\mathbb{R}^d\times\mathcal{P}_2(\mathbb{R}^d)\times\mathbb{R} \rightarrow\mathbb{R}$ is a $\mathcal{P}\times \mathcal{B}(\mathbb{R})\times \mathcal{B}(\mathcal{P}_2(\mathbb{R}))\times\mathcal{B}(\mathbb{R}^d)\times\mathcal{B}(\mathcal{P}_2(\mathbb{R}^d))\times \mathcal{B}(\mathbb{R})$-measurable map, and there exists $\lambda>0$ such that for all $t\in[0,T]$, $y,\bar{y},k,\bar{k}\in \mathbb{R}$, $\mu,\bar{\mu}\in\mathcal{P}_2(\mathbb{R})$ and $z,\bar{z}\in\mathbb{R}^d$, $\nu,\bar{\nu}\in\mathcal{P}_2(\mathbb{R}^d)$,
	\begin{equation*}
		| f(t,y,\mu,z,\nu,k) - f(t,\bar{y},\bar{\mu},\bar{z},\bar{\nu},\bar{k})| \leq \lambda( |y-\bar{y}| +\mathcal{W}_2(\mu,\bar{\mu}) + |z-\bar{z}| +\mathcal{W}_2(\nu,\bar{\nu}) + |k-\bar{k}|)
	\end{equation*}
and
\begin{equation*}
E\left[\int_0^T|f(t,0,\delta_0,0,\delta_0,0)|^2dt\right]<\infty.
\end{equation*}
 \item[($H^q_f$)]  he driver $f:[0,T]\times\Omega\times\mathbb{R}\times \mathcal{P}_2(\mathbb{R})\times\mathbb{R}^d\times\mathcal{P}_2(\mathbb{R}^d)\times\mathbb{R} \rightarrow\mathbb{R}$ is a $\mathcal{P}\times \mathcal{B}(\mathbb{R})\times \mathcal{B}(\mathcal{P}_2(\mathbb{R}))\times\mathcal{B}(\mathbb{R}^d)\times\mathcal{B}(\mathcal{P}_2(\mathbb{R}^d))\times \mathcal{B}(\mathbb{R})$-measurable map such that
     \begin{enumerate}
   \item[(1)] For each $t\in[0,T]$, $f(t,0,\delta_0,0,\delta_0,0)$ is bounded by some constant $L$, $\mathbb P$-a.s.
   \item[(2)] There exist some constants $\lambda> 0$ and $\alpha\in[0,1)$ such that, $\mathbb P$-a.s., for all $t\in[0,T]$, $y,\bar{y},k,\bar{k}\in \mathbb{R}$, $\mu,\bar{\mu}\in\mathcal{P}_2(\mathbb{R})$ and $z,\bar{z}\in\mathbb{R}^d$, $\nu,\bar{\nu}\in\mathcal{P}_2(\mathbb{R}^d)$,
	\begin{align*}
		| f(t,y,\mu,z,\nu,k) - f(t,\bar{y},\bar{\mu},\bar{z},\bar{\nu},\bar{k})| &\leq \lambda( |y-\bar{y}| +(1+|z|+|\bar{z}|)|z-\bar{z}|+|k-\bar{k}|)\\
    &\quad +\lambda \mathcal{W}_2(\mu,\bar{\mu})+\lambda(1+(\mathcal{W}_2(\nu,\delta_0))^{\alpha}+(\mathcal{W}_2(\bar{\nu},\delta_0))^{\alpha})\mathcal{W}_2(\nu,\bar{\nu}).
	\end{align*}
    \end{enumerate}
\item[($H_G$)] For all $t\in[0,T]$, $G_t:\mathcal{C}_T(\mathbb{R})\rightarrow \mathbb{R}$ satisfies that for $y,\bar{y}\in\mathcal{C}_T(\mathbb{R})$, $G_t(0)=0$ and
    \begin{align*}
    G_t(y)=G_t(\{y_{s\wedge t}\}_{0\leq s\leq t\leq T}),~\text{ and }~|G_t(y)-G_t(\bar{y})|\leq\sup_{0\leq u\leq t}|y_u-\bar{y}_u|.
    \end{align*}
\item[($H_\ell$)] The running loss function $\ell : \Omega\times[0,T]\times\mathbb{R} \rightarrow \mathbb{R}$ is an $\mathscr{F}_T\times\mathcal{ B}(\mathbb{R})\times \mathcal{ B}(\mathbb{R})$-measurable map and there exists some constant $C>0$ such that, $\mathbb P$-a.s.,
\begin{enumerate}
	\item $(t,y)\rightarrow \ell(t,y)$ is continuous,
	\item $\forall t\in[0,T]$, $y\rightarrow\ell(t,y)$ is strictly increasing,
	\item  $\forall t\in[0,T]$, $\mathbb{E}[\ell(t,\infty)]>0$,
	\item $\forall t\in[0,T]$, $\forall y\in\mathbb{R} $, $|\ell(t,y)| \leq C(1+|y|)$.
\end{enumerate}
\end{itemize}
As in \cite{BH,HHLLW}, we introduce the operator $L_t$ which is crucial to deal with mean reflection. $L_t:  \mathcal{L}^2 \rightarrow [0,\infty) $, $t\in[0,T]$ is given by
\begin{eqnarray*}
	L_t :X \rightarrow \inf\{ x\geq 0 : \mathbb{E}[\ell(t,x+X)] \geq 0 \},
\end{eqnarray*}
 which is well-defined due to the Assumption $(H_{\ell})$ (see \cite{BH}).

 We make the following assumption for the operator $L_t$ and recall a sufficient condition from \cite{BH} such that the following assumption holds true in Remark \ref{remark}.
 \begin{itemize}
 \item[($H_L$)] There exists a constant $C>0$ such that for each $t\in[0,T]$,
 \begin{equation*}
|L_t(X)-L_t(Y)|\leq C\mathbb{E}[|X-Y|], \ \forall  X,Y \in \mathcal{L}^2.
\end{equation*}
\end{itemize}
\begin{remark}\label{remark}{\upshape
Assume that $(H_{\ell})$ holds true. Suppose that $\ell$ is a
bi-Lipschitz function in $x$, i.e., there exist some constants $0<c_{\ell}\leq C_{\ell}$ such that, $\mathbb P$-a.s., for all $t\in[0,T]$ and for all $x,y\in\mathbb{R}$,  \begin{equation*}
		{c_{\ell}} |x-y| \leq |\ell(t,x)-\ell(t,y) | \leq C_{\ell} |x-y|,
	\end{equation*}
Then $(H_L)$ holds true with $C=\frac{C_{\ell} }{c_{\ell} }$ (see \cite{BH}).
}
\end{remark}
\begin{remark}
  Assumption $(H^q_f)$ can be satisfied easily. For instance, the following generator satisfies assumption $(H^q_f)$: for all $t\in[0,T],y\in\mathbb{R}$, $\mu\in\mathcal{P}_2(\mathbb{R})$, $z\in\mathbb{R}^d$, $\nu\in\mathcal{P}_2(\mathbb{R}^d)$, $k\in\mathbb{R}$,
  \begin{equation*}
    f(t,y,\mu,z,\nu,k)=1+|y|+|z|+|z|^2+\mathcal{W}_2(\mu,\delta_0)+\left(\mathcal{W}_2(\nu,\delta_0)\right)^{\frac{3}{2}}.
  \end{equation*}
\end{remark}
\begin{remark}
If $\alpha=1$ in $(H^q_f)$, our results still hold by additionally assuming that the local Lipschitz constant of $f$ with respect to $\nu$ is small enough.  
\end{remark}
\subsection{Lipschitz case}
In this section, we study the existence and uniqueness theorem of equation \eqref{my1} with Lipschitz generator and square integrable terminal condition. As in \cite{BH}, we introduce the following definition of deterministic flat solutions of mean field BSDEs with mean reflection and nonlinear resistance in the Lipschitz case.
\begin{definition}
A triple of processes $(Y,Z,K)\in\mathcal{S}^{2}\times \mathcal{H}^2\times \mathcal{A}^2_D$
is said to be a deterministic solution to the mean-field BSDE \eqref{my1} with mean reflection and nonlinear resistance if it ensures that the equation \eqref{my1} holds true. A solution is said to be ``{\it flat}'' if moreover that $K$ increases only when needed, i.e., when we have
\begin{equation*}
		\int_0^T \mathbb{E}[\ell(t,Y_t)]  dK_t = 0.
\end{equation*}
\end{definition}

The first main result of this section is on the existence and uniqueness of the local solution for the mean field BSDE with mean reflection and nonlinear resistance, which reads as follows:
\begin{theorem}\label{Thm1L}
Assume that $(H_{\xi})-(H_f)-(H_G)-(H_{\ell})-(H_L)$ hold. Then there exists a constant $\varepsilon>0$ depending only on $C, L$ and $\lambda$ such that for any $T\in (0,\varepsilon]$, the mean-field BSDE \eqref{my1} with mean reflection and nonlinear resistance admits a unique deterministic flat solution $(Y,Z,K)\in \mathcal{S}^{2}\times \mathcal{H}^2\times\mathcal{A}^2_D$.
\end{theorem}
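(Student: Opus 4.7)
The plan is a contraction mapping argument in which the three nonlocal arguments $\mathbb{E}[Y_s]$, $\mathbb{E}[Z_s]$, and $G_s(K)$ of the driver are frozen, so that the inner problem becomes a classical Lipschitz BSDE with mean reflection of the type treated in \cite{BH}. For $T \leq \delta$ with $\delta$ to be chosen, I would work in the product space $\mathcal{S}^{2} \times \mathcal{H}^{2} \times \mathcal{A}^2_D$ equipped with a suitable (possibly weighted) norm, and associate to each input $(U, V, \tilde{K})$ the frozen driver $\hat{f}(s, y, z) := f(s, y, \mathbb{E}[U_s], z, \mathbb{E}[V_s], G_s(\tilde{K}))$. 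This $\hat{f}$ is $\lambda$-Lipschitz in $(y, z)$ by $(H_f)$, and $\hat{f}(\cdot, 0, 0) \in \mathcal{H}^{2}$ using $U \in \mathcal{S}^{2}$, $V \in \mathcal{H}^{2}$, together with $G_s(0) = 0$ and the Lipschitz property of $G$ from $(H_G)$. The existence result of \cite{BH} under $(H_\xi)$--$(H_\ell)$--$(H_L)$ then yields a unique deterministic flat solution $(Y, Z, K) \in \mathcal{S}^{2} \times \mathcal{H}^{2} \times \mathcal{A}^2_D$ of the BSDE with mean reflection driven by $\hat{f}$ and $\xi$. This defines the self-map $\Phi : (U, V, \tilde{K}) \mapsto (Y, Z, K)$ whose fixed point is the desired solution.

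\textbf{Contraction estimate.} For two inputs with images $(Y^i, Z^i, K^i)$, $i = 1, 2$, I would subtract the BSDEs, apply It\^o's formula to $|Y^1_t - Y^2_t|^2$, and combine the Burkholder--Davis--Gundy and Young inequalities with the Lipschitz bound on $f$ in the mean-field and resistance slots to obtain
\[
\|Y^1 - Y^2\|_{\mathcal{S}^{2}}^2 + \|Z^1 - Z^2\|_{\mathcal{H}^{2}}^2 \leq \eta(T)\Bigl(\|U^1 - U^2\|_{\mathcal{S}^{2}}^2 + \|V^1 - V^2\|_{\mathcal{H}^{2}}^2 + \|\tilde{K}^1 - \tilde{K}^2\|_{\mathcal{S}^{2}}^2\Bigr) + c\,\|K^1_T - K^2_T\|_{\mathcal{L}^{2}}^2,
\]
with $\eta(T) \to 0$ as $T \downarrow 0$. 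The $K$-term is then handled through the representation of the reflection increments from \cite{BH}: flatness forces $K^i_T - K^i_t = \sup_{t \leq s \leq T} L_s(\Psi^i_s)$ for an explicit functional $\Psi^i$ of the BSDE data, so $(H_L)$ yields $\|K^1 - K^2\|_{\mathcal{S}^{2}} \leq C\|\Psi^1 - \Psi^2\|_{\mathcal{S}^{2}}$, which is bounded in turn by $\|Y^1 - Y^2\|_{\mathcal{S}^{2}}$ up to differences of the frozen inputs. Substituting back and choosing $T$ small enough to absorb a fraction of the $Y$-norm on the right-hand side, I obtain $\|\Phi(U^1, V^1, \tilde{K}^1) - \Phi(U^2, V^2, \tilde{K}^2)\|^2 \leq \theta(T)\,\|(U^1 - U^2, V^1 - V^2, \tilde{K}^1 - \tilde{K}^2)\|^2$ with $\theta(T) \to 0$. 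Any $\delta$ satisfying $\theta(\delta) < 1$ makes $\Phi$ a strict contraction, and its unique fixed point is the sought local deterministic flat solution.

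\textbf{Main obstacle.} The critical difficulty is the coupling between $K$ and $Y$: since $K$ enters the driver through $G_s(K)$ and also serves as the additive reflection term, a naive energy estimate produces a $\|K^1_T - K^2_T\|^2$ contribution with no obvious $T$-smallness. The key ingredient to close the loop is precisely the global Lipschitz assumption $(H_L)$ on $L_t$, which converts reflection differences into $Y$-differences that then carry $T$-small prefactors coming from the BSDE estimate; without $(H_L)$ this step would fail. The resulting contraction constant $\theta(T)$ is of order $T + \sqrt{T}$ modulated by $\lambda$ and by the Lipschitz constant of $L$, which explains why the maximal $\delta$ depends only on the problem parameters and not on the terminal datum $\xi$ -- this independence is exactly the feature exploited later in the stitching argument. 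Uniqueness on $[0, \delta]$ follows automatically from strict contraction, while flatness of the fixed point is inherited from the Briand--Elie--Hu construction used at each iteration.
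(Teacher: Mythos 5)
Your proposal follows essentially the same route as the paper: freeze $\mathbb{E}[y_s]$, $\mathbb{E}[z_s]$ and $G_s(k)$, solve the resulting Lipschitz BSDE with mean reflection via \cite{BH}, and obtain a contraction on a small horizon by combining the representation $K_T-K_t=\sup_{t\leq s\leq T}L_s(X_s)$ with $(H_L)$ so that the reflection differences carry $T$-small prefactors, exactly as in the paper's Lemma \ref{Lem1L} and fixed-point argument. The only (minor) technical deviation is that the paper estimates $Y^1-Y^2$ by taking conditional expectations (legitimate because the $K$-increments are deterministic) plus the Burkholder--Davis--Gundy inequality instead of applying It\^o's formula to $|Y^1-Y^2|^2$, which avoids the term $\int (Y^1_s-Y^2_s)\,d(K^1_s-K^2_s)$ that your sketch would otherwise need to control (e.g.\ by integration by parts), since the total variation of $K^1-K^2$ is not bounded by its sup norm.
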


In fact, under assumptions $(H_{\xi})-(H_f)-(H_G)-(H_{\ell})-(H_L)$, for each $(y,z,k)\in \mathcal{S}^{2}\times\mathcal{H}^2\times\mathcal{A}^2_D$, it follows from \cite[Theorem 9]{BH} that the following BSDE with mean reflection
\begin{align}\label{1L}
\begin{cases}
&Y_t^{y,z,k}=\xi+\int^T_t f(s,Y^{y,z,k}_s,\mathbb{P}_{y_s},Z^{y,z,k}_s,\mathbb{P}_{z_s},G_s(k))ds-\int^T_t Z_s^{y,z,k} dB_s+K_T^{y,z,k}-K_t^{y,z,k};\\
& \ \ \ \ \ \ \ \ \ \ \ \ \ \ \ \ \ \ \ \ \ \ \ \mathbb{E}[\ell(t,Y_t^{y,z,k})]\geq 0,
\end{cases} \  \  t\in [0, T],
\end{align}
has a unique  deterministic flat solution $(Y^{y,z,k},Z^{y,z,k},K^{y,z,k})\in \mathcal{S}^{2}\times\mathcal{H}^2\times\mathcal{A}^2_D$.
\begin{lemma} \label{Lem1L}
Assume that $(H_{\xi})-(H_f)-(H_G)-(H_{\ell})-(H_L)$ hold. For $i=1,2$, let 
\begin{equation*}
  (Y^i,Z^i,K^i):=(Y^{y^i,z^i,k^i}, Z^{y^i,z^i,k^i},K^{y^i,z^i,k^i})
\end{equation*}
be the solution to the BSDE \eqref{1L} with mean reflection associated with the data $(y^i,z^i,k^i)$, then there exists constant $\varepsilon>0$ depending only on $C, L$ and $\lambda$ such that for any $T\in (0,\varepsilon]$, we have
\begin{align*}
&\|Y^{1}-Y^{2}\|^2_{\mathcal{S}^{2}}+\|Z^{1}-Z^{2}\|^2_{\mathcal{H}^2}+\sup_{0\leq t\leq T}|K^{1}_t-K^{2}_t|^2\\
&\leq \frac{1}{2}\left(\|y^{1}-y^{2}\|^2_{\mathcal{S}^{2}}+\|z^{1}-z^{2}\|^2_{\mathcal{H}^2}+\sup_{0\leq t\leq T} |k^1_t-k^2_t|^2\right).
\end{align*}
\end{lemma}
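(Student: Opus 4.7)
The natural approach is to reduce the reflected equations to a pair of standard (unreflected) BSDEs. For $i=1,2$ set
\[
U^i_t := Y^{y^i,z^i,k^i}_t - \bigl(K^{y^i,z^i,k^i}_T - K^{y^i,z^i,k^i}_t\bigr),
\]
so that $U^i$ satisfies the ordinary BSDE
\[
U^i_t = \xi + \int_t^T f\bigl(s, Y^i_s, \mathbb{E}[y^i_s], Z^i_s, \mathbb{E}[z^i_s], G_s(k^i)\bigr)\, ds - \int_t^T Z^i_s\, dB_s, \qquad t\in [0,T].
\]
From the flatness of $K^i$ together with $(H_\ell)$ one obtains the standard representation $K^i_T - K^i_t = \sup_{s \in [t,T]} L_s(U^i_s)$ (as in \cite{BH}), which is deterministic. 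Combining this with $(H_L)$ and Jensen's inequality yields the preliminary estimates
\[
\sup_{0 \leq t \leq T} |K^1_t - K^2_t| \leq 2C\, \|U^1 - U^2\|_{\mathcal{S}^2}, \qquad \|Y^1 - Y^2\|_{\mathcal{S}^2}^2 \leq 2(1+C^2)\, \|U^1 - U^2\|_{\mathcal{S}^2}^2.
\]

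The heart of the argument is then a standard a priori estimate for $\Delta U := U^1 - U^2$ and $\Delta Z := Z^1 - Z^2$. Applying It\^o's formula to $|\Delta U_t|^2$ and invoking $(H_f)$ and $(H_G)$, the driver difference is bounded by
\[
|\Delta f_s| \leq \lambda\bigl(|Y^1_s - Y^2_s| + \mathbb{E}|y^1_s - y^2_s| + |\Delta Z_s| + \mathbb{E}|z^1_s - z^2_s| + \sup_{u \leq s}|k^1_u - k^2_u|\bigr).
\]
Young's inequality absorbs a fraction of $\|\Delta Z\|^2_{\mathcal{H}^2}$ on the left, while Jensen's inequality controls the mean-field contributions by $\|y^1-y^2\|^2_{\mathcal{S}^2}$ and $\|z^1-z^2\|^2_{\mathcal{H}^2}$. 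Taking the supremum in $t$ via the Burkholder--Davis--Gundy inequality and using the preliminary bound to replace $\|Y^1-Y^2\|_{\mathcal{S}^2}$ by a multiple of $\|\Delta U\|_{\mathcal{S}^2}$, I expect an inequality of the shape
\[
\|\Delta U\|_{\mathcal{S}^2}^2 + \|\Delta Z\|_{\mathcal{H}^2}^2 \leq C_1 T \|\Delta U\|_{\mathcal{S}^2}^2 + C_2 T\bigl(\|y^1-y^2\|_{\mathcal{S}^2}^2 + \|z^1-z^2\|_{\mathcal{H}^2}^2 + \sup_t|k^1_t-k^2_t|^2\bigr),
\]
with $C_1, C_2$ depending only on $\lambda$ and $C$. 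For $T$ sufficiently small the first right-hand term is absorbed on the left, leaving a factor linear in $T$ times the input-difference norms.

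Combining with the preliminary bounds of step one converts this into a bound on $\|Y^1-Y^2\|_{\mathcal{S}^2}^2 + \|Z^1-Z^2\|_{\mathcal{H}^2}^2 + \sup_t|K^1_t-K^2_t|^2$ that is linear in $T$ times the sum of the three input norms, with an overall constant depending only on $\lambda, C$ and $L$. Choosing $\delta>0$ small enough that this constant is at most $\tfrac{1}{2}$ completes the proof. The main obstacle I foresee is the feedback between $Y^i$ and $U^i$: because the driver is evaluated at $Y^i$ while the BSDE governs $U^i$, the Lipschitz bound on $\Delta f$ re-introduces $\|Y^1-Y^2\|_{\mathcal{S}^2}$, and closing the loop cleanly requires the representation of the flat $K$ together with careful use of $(H_L)$ so that the eventual horizon $\delta$ indeed depends only on $\lambda, C$ and $L$.
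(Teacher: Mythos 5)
Your proposal is correct and follows essentially the same route as the paper: the increment $K^i_T-K^i_t$ is represented as $\sup_{t\le s\le T}L_s(\cdot)$ evaluated along the process you call $U^i$ (which coincides with the paper's $X^i_t=\mathbb{E}_t\left[\xi+\int_t^T f^i_s\,ds\right]$), assumption $(H_L)$ together with the Lipschitz and $(H_G)$ bounds controls the $K$-part, and smallness of $T$ produces the contraction factor $\tfrac12$ with $\delta$ depending only on $\lambda$ and $C$. The only difference is bookkeeping: you derive the $(\Delta U,\Delta Z)$ estimate by It\^o's formula applied to $|\Delta U|^2$ and then transfer back to $(\Delta Y,\Delta K)$ via your preliminary bounds, whereas the paper estimates $\Delta Y$, $\Delta Z$ and $\Delta K$ directly from the equation (conditional expectation, squaring at $t=0$, and the forward representation of the deterministic $K$), both arguments closing the $Y^i$--$U^i$ feedback in the same way.
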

\begin{proof}
In order to simplify notations, for each $i=1,2$, we will denote
\begin{align*}
f^i_{\cdot}:=f(\cdot,Y^{i}_{\cdot},\mathbb{P}_{y^i_{\cdot}},Z_{\cdot}^{i},\mathbb{P}_{z^i_{\cdot}},G_{\cdot}(k^i)).
\end{align*}
Since for each $i=1,2$ and $t\in[0,T]$,
\begin{equation*}
	K^{i}_T-K^{i}_t= \sup_{t\leq s\leq T} L_s(X_s^{i})
\end{equation*}
with $X_t^{i} := \mathbb{E}_t\left[\xi+\int^T_t f^i_sds\right]$, we have
 \begin{align}
&\sup_{0\leq t\leq T} |(K^{1}_T-K^{1}_t)-(K^{2}_T-K^{2}_t)|\notag\\
&\leq  \sup_{0\leq s\leq T}|L_s(X_s^{1})-L_s(X_s^{2})|\label{2L}\\
 &\leq C\lambda\mathbb{E}\left[\int^T_{0} (|Y^{^1}_s-Y^{2}_s|+\mathbb{E}[|y^1_s-y^2_s|^2]^{\frac{1}{2}}+|G_s(k^1)-G_s(k^2)|)dr\right]\notag\\
 &\quad+C\lambda\mathbb{E}\left[\int_0^T(|Z_s^{^1}-Z_s^{2}|+\mathbb{E}[|z^1_s-z^2_s|^2]^{\frac{1}{2}})dr\right]\notag\\
 &\leq C\lambda T(\|Y^{1}-Y^{2}\|_{\mathcal{S}^{2}}+\|y^1-y^2\|_{\mathcal{S}^2}+\sup_{0\leq t\leq T} |k^1_t-k^2_t|)\notag\\
 &\quad+C\lambda\sqrt{T}(\|Z^{1}-Z^{2}\|_{\mathcal{H}^2}+\sqrt{T}\|z^1-z^2\|_{\mathcal{H}^2}).
   \nonumber\end{align}
Note that for each $t\in[0,T]$,
\begin{align*}
Y^{1}_t-Y^{2}_t+\int^T_t(Z^{1}_s-Z^{2}_s)dB_s=\int^T_t(f^1_s-f^2_s)ds+(K^{1}_T-K^{1}_t)-(K^{2}_T-K^{2}_t).
\end{align*}
Taking conditional expectation with respect to $\mathcal{F}_t$, we have
\begin{align*}
Y^{1}_t-Y^{2}_t= \mathbb{E}_t\left[\int^T_t(f^1_s-f^2_s)ds\right]+(K^{1}_T-K^{1}_t)-(K^{2}_T-K^{2}_t).
\end{align*}
Using Burkholder-Davis-Gundy inequality, we have
\begin{align*}
&\mathbb{E}\left[\sup_{0\leq t\leq T}|Y^{1}_t-Y^{2}_t|^2\right]\\
&\leq 8\mathbb{E}\left[\left(\int^T_0|f^1_s-f^2_s|ds\right)^2\right]+2\sup_{0\leq t\leq T}|(K^{1}_T-K^{1}_t)-(K^{2}_T-K^{2}_t)|^2\\
&\leq 8T\mathbb{E}\left[\int^T_0|f^1_s-f^2_s|^2ds\right]+2\sup_{0\leq t\leq T}|(K^{1}_T-K^{1}_t)-(K^{2}_T-K^{2}_t)|^2\\
&\leq 40\lambda^2T\mathbb{E}\left[\int^T_{0} (|Y^{1}_s-Y^{2}_s|^2+\mathbb{E}[|y^1_s-y^2_s|^2]+|Z_s^{1}-Z_s^{2}|^2+\mathbb{E}[|z^1_s-z^2_s|^2]+|G_s(k^1)-G_s(k^2)|^2)dr\right]\\
&\quad+2\sup_{0\leq t\leq T}|(K^{1}_T-K^{1}_t)-(K^{2}_T-K^{2}_t)|^2.
\end{align*}
Therefore, it holds that
\begin{align*}
\|Y^{1}-Y^{2}\|_{\mathcal{S}^{2}}&\leq 10(4+C^2)\lambda^2T^2\left(\|Y^{1}-Y^{2}\|^2_{\mathcal{S}^{2}}+\|y^{1}-y^{2}\|^2_{\mathcal{S}^2}+\sup_{0\leq t\leq T} |k^1_t-k^2_t|^2\right)\\
&\quad +10(4+C^2)\lambda^2T\left(\|Z^{1}-Z^{2}\|^2_{\mathcal{H}^{2}}+\|z^{1}-z^{2}\|^2_{\mathcal{H}^2}\right).
\end{align*}
On the other hand, we have
\begin{align*}
&Y^{1}_0-Y^{2}_0+\int^T_0(Z^{1}_s-Z^{2}_s)dB_s=\int^T_0(f^1_s-f^2_s)ds+(K^{1}_T-K^{1}_0)-(K^{2}_T-K^{2}_0).
\end{align*}
Taking square and taking expectation on both sides imply that
\begin{align*}
&\mathbb{E}\left[\int_0^T|Z^{1}_s-Z^{2}_s|^2ds\right]\\
&\leq 2\mathbb{E}\left[\left(\int^T_0|f^1_s-f^2_s|ds\right)^2\right]+2\sup_{0\leq t\leq T}|(K^{1}_T-K^{1}_t)-(K^{2}_T-K^{2}_t)|^2\\
&\leq 10(1+C^2)\lambda^2T^2\left(\|Y^{1}-Y^{2}\|^2_{\mathcal{S}^{2}}+\|y^{1}-y^{2}\|^2_{\mathcal{S}^2}+\sup_{0\leq t\leq T} |k^1_t-k^2_t|^2\right)\\
&\quad +10(1+C^2)\lambda^2T\left(\|Z^{1}-Z^{2}\|^2_{\mathcal{H}^{2}}+\|z^{1}-z^{2}\|^2_{\mathcal{H}^2}\right).
\end{align*}
Meanwhile, it holds that
\begin{equation*}
K^{1}_t=Y^{1}_0-Y^{1}_t-\int_0^tf^1_sds+\int_0^tZ^{1}_sdB_s
\end{equation*}
and
\begin{equation*}
K^{2}_t=Y^{2}_0-Y^{2}_t-\int_0^tf^2_sds+\int_0^tZ^{2}_sdB_s.
\end{equation*}
Therefore, we have
\begin{align*}
\sup_{0\leq t\leq T}|K^{k^1}_t-K^{k^2}_t|\leq |Y^{1}_0-Y^{2}_0|+\sup_{0\leq t\leq T}E\left[|Y^{1}_t-Y^{2}_t|\right]+\sup_{0\leq t\leq T}E\left[\bigg|\int_0^tf^1_sds-\int_0^tf^2_sds\bigg|\right].
\end{align*}
Hence, we obtain
\begin{align*}
&\sup_{0\leq t\leq T}|K^{1}_t-K^{2}_t|^2\\
&\leq 8\mathbb{E}\left[\sup_{0\leq t\leq T}|Y^{1}_t-Y^{2}_t|^2\right]+2\mathbb{E}\left[\left(\int_0^T|f^1_s-f^2_s|ds\right)^2\right]\\
&\leq 10(33+8C^2)\lambda^2T^2\left(\|Y^{1}-Y^{2}\|^2_{\mathcal{S}^{2}}+\|y^{1}-y^{2}\|^2_{\mathcal{S}^2}+\sup_{0\leq t\leq T} |k^1_t-k^2_t|^2\right)\\
&\quad +10(33+8C^2)\lambda^2T\left(\|Z^{1}-Z^{2}\|^2_{\mathcal{H}^{2}}+\|z^{1}-z^{2}\|^2_{\mathcal{H}^2}\right).
\end{align*}
Therefore, we have
\begin{align*}
&\|Y^{1}-Y^{2}\|^2_{\mathcal{S}^{2}}+\|Z^{1}-Z^{2}\|^2_{\mathcal{H}^2}+\sup_{0\leq t\leq T}|K^{1}_t-K^{2}_t|^2\\
&\leq 10(38+10C^2)\lambda^2T^2\left(\|Y^{1}-Y^{2}\|^2_{\mathcal{S}^{2}}+\|y^{1}-y^{2}\|^2_{\mathcal{S}^2}+\sup_{0\leq t\leq T} |k^1_t-k^2_t|^2\right)\\
&\quad +10(38+10C^2)\lambda^2T\left(\|Z^{1}-Z^{2}\|^2_{\mathcal{H}^{2}}+\|z^{1}-z^{2}\|^2_{\mathcal{H}^2}\right).
\end{align*}
Now we define
\begin{equation}\label{3L}
\varepsilon:=\min\left( \sqrt{\frac{1}{40(38+10C^2)\lambda^2 }}, \frac{1}{40(38+10C^2)\lambda^2 }\right),
\end{equation}
and it is straightforward to check that for any $T \in (0, \varepsilon]$,
\begin{align*}
&\frac{3}{4}\|Y^{1}-Y^{2}\|^2_{\mathcal{S}^{2}}+\frac{3}{4}\|Z^{1}-Z^{2}\|^2_{\mathcal{H}^2}+\sup_{0\leq t\leq T}|K^{1}_t-K^{2}_t|^2\\
&\leq \frac{1}{4}\left(\|y^{1}-y^{2}\|^2_{\mathcal{S}^{2}}+\|z^{1}-z^{2}\|^2_{\mathcal{H}^2}+\sup_{0\leq t\leq T} |k^1_t-k^2_t|^2\right).
\end{align*}
which completes the proof.
\end{proof}

Then we give the proof of Theorem \ref{Thm1L}.

\medskip
\begin{proof}[Proof of Theorem \ref{Thm1L}]
We take $\varepsilon$ as \eqref{3L}. For $T\in (0,\varepsilon]$, define $(Y^0,Z^0,K^0)=(0,0,0)$ and by \eqref{1L}, define $(Y^1, Z^1, K^1):=(Y^{Y^0,Z^0,K^0}, Z^{Y^0,Z^0,K^0}, K^{Y^0,Z^0,K^0})$. By recurrence, for each $i\in \mathbb{N}$, set
\begin{equation}\label{4L}
(Y^{i+1}, Z^{i+1}, K^{i+1}):=(Y^{Y^i,Z^i,K^i}, Z^{Y^i,Z^i,K^i}, K^{Y^i,Z^i,K^i}).
\end{equation}
It follows from Lemma \ref{Lem1L} that there exist $Y\in \mathcal{S}^2$, $Z\in \mathcal{H}^2$ and $K\in \mathcal{A}^2_D$ such that
\begin{equation}\label{5L}
\sup_{0\leq t\leq T}|{K}^{n}_t-{K}_t|^2\longrightarrow 0,\quad \|{Y}^{n}-{Y}\|_{\mathcal{S}^{2}}\longrightarrow 0 \qquad {\rm and}\qquad
\|{Z}^{n}-{Z}\|_{\mathcal{H}^2}\longrightarrow 0.
\end{equation}
By a standard argument, we have for each $t\in [0, T]$, in $\mathcal{L}^2(\mathscr{F}_T)$,
\begin{align*}
&\int^T_t f(s, Y^n_s,\mathbb{P}_{Y^{n-1}_s}, Z^{n}_s,\mathbb{P}_{Z^{n-1}_s},G_s(K^{n-1}))ds\\
&\longrightarrow \int^T_t f(s, Y_s,\mathbb{P}_{Y_s}, Z_s,\mathbb{P}_{Z_s}, G_s(K))ds.
\end{align*}
Thus, the triple $(Y, Z, K)$ is a solution to the mean-field BSDE (\ref{my1}) with mean reflection and nonlinear resistance and we only need to prove that the solution $(Y, Z, K)$ is ``flat''. Indeed, it is easy to check that
$$
K_T-K_t =\lim_{n\rightarrow \infty} K^n_T-K^n_t
=\lim_{n\rightarrow \infty}\sup_{t\leq s\leq T} L_s(X^n_s),$$
where
\begin{equation*}
X_t^n := \mathbb{E}_t\left[\xi+\int^T_t f(s,Y^{n}_s,\mathbb{P}_{Y^{n-1}_s},Z^n_s,\mathbb{P}_{Z^{n-1}_s},G_s(K^{n-1}))ds\right].
\end{equation*}
By \eqref{4L} and \eqref{5L}, we deduce
$$\|X^n-X\|_{{\mathcal{S}^{2}}}\longrightarrow 0,$$ where
\begin{equation*}
X_t := \mathbb{E}_t\left[\xi+\int^T_t f(s,Y_s,\mathbb{P}_{Y_s},Z_s,\mathbb{P}_{Z_s},G_s(K))ds\right].
\end{equation*}
Therefore, $K_T-K_t =
\sup_{t\leq s\leq T} L_s(X_s)$, which implies the ``flatness''. The uniqueness follows immediately from Lemma \ref{Lem1L}. The proof is complete.

\end{proof}
\medskip

We now construct a global solution to equation \eqref{my1} for arbitrarily large time horizon.
\begin{theorem}\label{Thm2L}
Assume that $(H_{\xi})-(H_f)-(H_G)-(H_{\ell})-(H_L)$ hold. Then for arbitrarily large $T$, the mean-field BSDE \eqref{my1} with mean reflection admits a unique deterministic flat solution $(Y,Z,K)\in \mathcal{S}^{2}\times \mathcal{H}^2\times\mathcal{A}^2_D$ on $[0,T]$.
\end{theorem}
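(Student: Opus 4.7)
The plan is to stitch together local solutions provided by Theorem \ref{Thm1L}. The essential point, already flagged by the author, is that the constant $\delta$ from \eqref{3L} depends only on $C$ and $\lambda$ and not on the terminal condition. I therefore fix a partition $0 = t_0 < t_1 < \ldots < t_n = T$ with $t_i - t_{i-1} \leq \delta$ for all $i$, so that Theorem \ref{Thm1L} is applicable on each subinterval $[t_{i-1}, t_i]$.

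First I would build the solution by backward induction on the subintervals. On $[t_{n-1}, T]$, apply Theorem \ref{Thm1L} with terminal $\xi$ to obtain a deterministic flat local solution $(Y^{(n)}, Z^{(n)}, K^{(n)}) \in \mathcal{S}^2_{[t_{n-1},T]} \times \mathcal{H}^2_{[t_{n-1},T]} \times \mathcal{A}^2_{D,[t_{n-1},T]}$. For the inductive step, assume the solution has been built on $[t_i, T]$. Then $Y^{(i+1)}_{t_i} \in \mathcal{L}^2(\mathscr{F}_{t_i})$, and the mean reflection constraint at time $t_i$ gives $\mathbb{E}[\ell(t_i, Y^{(i+1)}_{t_i})] \geq 0$, so $(H_\xi)$ is satisfied with $Y^{(i+1)}_{t_i}$ playing the role of the terminal condition. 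Applying Theorem \ref{Thm1L} on $[t_{i-1}, t_i]$ yields a flat local solution $(Y^{(i)}, Z^{(i)}, K^{(i)})$.

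Next I would paste the local pieces into global processes by setting $Y_t := Y^{(i)}_t$, $Z_t := Z^{(i)}_t$ for $t \in [t_{i-1}, t_i)$, $Y_T := \xi$, and
\begin{equation*}
K_t := \sum_{j < i}\bigl(K^{(j+1)}_{t_{j+1}} - K^{(j+1)}_{t_j}\bigr) + \bigl(K^{(i)}_t - K^{(i)}_{t_{i-1}}\bigr), \quad t \in [t_{i-1}, t_i].
\end{equation*}
Continuity of $Y$ at each node $t_i$ is built into the construction by terminal-matching, and $(Y, Z, K) \in \mathcal{S}^2 \times \mathcal{H}^2 \times \mathcal{A}^2_D$ as a finite concatenation of pieces from the corresponding spaces; in particular $K$ remains deterministic and non-decreasing. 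Because the generator does not depend on $k$, the mean-field BSDE \eqref{my1} is purely local in time, so the dynamics and the mean reflection constraint hold on each subinterval and therefore on $[0,T]$. Flatness follows additively:
\begin{equation*}
\int_0^T \mathbb{E}[\ell(t, Y_t)]\, dK_t = \sum_{i=1}^{n} \int_{t_{i-1}}^{t_i} \mathbb{E}[\ell(t, Y^{(i)}_t)]\, dK^{(i)}_t = 0.
\end{equation*}

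Uniqueness is obtained by the same backward induction: two deterministic flat global solutions must coincide on $[t_{n-1}, T]$ by the uniqueness clause of Theorem \ref{Thm1L}; their common value at $t_{n-1}$ then serves as terminal on $[t_{n-2}, t_{n-1}]$, and iterating reaches $0$. I do not anticipate a serious obstacle, since everything reduces to finitely many applications of Theorem \ref{Thm1L} and routine concatenation. The only conceptual point worth highlighting is why the assumption that $f$ is independent of $k$ is essential: if $G_s(K)$ were present, the resistance would couple the subintervals non-locally through the whole path of $K$, and the local well-posedness on $[t_{i-1}, t_i]$ would in general change once $K$ is extended beyond $t_i$, breaking the backward stitching.
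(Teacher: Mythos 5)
Your proposal is correct and follows essentially the same route as the paper: since the contraction horizon $\delta$ from \eqref{3L} depends only on $C$ and $\lambda$, you stitch backward the local flat solutions from Theorem \ref{Thm1L} over subintervals of length at most $\delta$, paste $Y$, $Z$, $K$ at the nodes, and obtain flatness and uniqueness interval by interval. Your write-up is in fact slightly more explicit than the paper's (checking that $Y^{(i+1)}_{t_i}$ satisfies $(H_\xi)$, the additivity of the flatness integral, and why independence of $f$ from $k$ is needed), but the argument is the same.
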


\medskip

\begin{proof} The proof is divided into two steps.

{\it Step 1.} For given $k\in\mathcal{A}^2_{D}$, we will show that the following BSDE with mean reflection
\begin{align}\label{my2}
  \begin{cases}
  &Y_t=\xi+\int^T_t f(s,Y_s,\mathbb{P}_{Y_s},Z_s,\mathbb{P}_{Z_s},G_s(k))ds-\int^T_t Z_s dB_s+K_T-K_t;\\
  & \ \ \ \ \ \ \ \ \ \ \ \ \ \ \ \ \ \ \mathbb{E}[\ell(t,Y_t)]\geq 0,
  \end{cases} \  \  \forall t\in[0,T],
  \end{align}
  admits a unique deterministic flat solution $(Y,Z,K)\in \mathcal{S}^{2}\times \mathcal{H}^2\times\mathcal{A}^2_D$. In fact, by Theorem \ref{Thm1L}, there exists some constant $\varepsilon>0$ depending only on $L, \lambda$ and $C$ such that the mean-field BSDE \eqref{my2} with mean reflection admits a unique deterministic flat  solution$(Y^1,Z^1,K^1)\in \mathcal{S}^{2}_{[T-\varepsilon,T]}\times\mathcal{H}^2_{[T-\varepsilon,T]}\times\mathcal{A}^2_{D,[T-\varepsilon,T]}$ on the time interval $[T-\varepsilon,T]$. Next we take  $T -\varepsilon$ as the terminal time and
apply Theorem \ref{Thm1L} again to find the unique deterministic flat solution of the mean-field BSDE \eqref{my2} with mean reflection $(Y^2,Z^2,K^2)\in \mathcal{S}^{2}_{[T-2\varepsilon,T-\varepsilon]}\times\mathcal{H}^2_{[T-2\varepsilon,T-\varepsilon]}\times\mathcal{A}^2_{D,[T-2\varepsilon,T-\varepsilon]}$ on the time interval $[T-2\varepsilon,T-\varepsilon]$. Let us set
\[
{Y}_t=\sum\limits_{i=1}^2Y^i_t\mathbf{1}_{[T-i\varepsilon,T-(i-1)\varepsilon)}+Y^1_T\mathbf{1}_{\{T\}}, \  {Z}_t=\sum\limits_{i=1}^2Z^i_t\mathbf{1}_{[T-i\varepsilon,T-(i-1)\varepsilon)}+Z^1_T\mathbf{1}_{\{T\}}
\]
on $[T-2\varepsilon,T]$ and ${K}_t=K^2_t$ on $[T-2\varepsilon,T-\varepsilon)$, ${K}_t=K^2_{T-\varepsilon}+K^1_t$ on $[T-\varepsilon,T]$. One can easily check that $(Y,Z,K)\in\mathcal{S}^{2}_{[T-2\varepsilon,T]}\times\mathcal{H}^2_{[T-2\varepsilon,T]}\times\mathcal{A}^2_{D,[T-2\varepsilon,T]}$ is a deterministic flat solution to the mean-field BSDE \eqref{my2} with mean reflection on $[T-2\varepsilon,T]$.

Furthermore, we repeat this procedure so that we can build a deterministic flat
solution  $(Y,Z,K)\in\mathcal{S}^{2}\times\mathcal{H}^2\times\mathcal{A}^2_{D}$ to the mean-field BSDE \eqref{my2} with mean reflection on $[0,T]$. The uniqueness of the global solution on $[0, T]$ is inherited from the uniqueness of local solution on each time interval.

{\it Step 2.} Let $N$ be the smallest integer such that $N\geq \frac{T}{\varepsilon}$. For $k^i\in\mathcal{A}^2_D$, $i=1,2$, let $(Y^i,Z^i,K^i)$ be the unique deterministic flat solution to \eqref{my2} with data $k^i$. It follows from a similar argument as in the proof of Lemma \ref{Lem1L} that for any $1\leq i\leq N$,
\begin{align*}
  \sup_{(i-1)\varepsilon\leq t\leq i\varepsilon \wedge T}|K^{1}_t-K^{2}_t|^2\leq \frac{1}{2}\left(\sup_{0\leq t\leq T} |k^1_t-k^2_t|^2\right).
  \end{align*}  
Since $K^1,K^2$ are determinsitic, we therefore have 
\begin{align*}
  \sup_{0\leq t\leq T}|K^{1}_t-K^{2}_t|^2\leq \max_{1\leq i\leq N}\left(\sup_{(i-1)\varepsilon\leq t\leq i\varepsilon \wedge T}|K^{1}_t-K^{2}_t|^2\right)\leq \frac{1}{2}\left(\sup_{0\leq t\leq T} |k^1_t-k^2_t|^2\right).
\end{align*}
Thus it follows from stanadard contraction mapping argument that mean-field BSDE \eqref{my1} with mean reflection admits a unique deterministic flat solution $(Y,Z,K)\in \mathcal{S}^{2}\times \mathcal{H}^2\times\mathcal{A}^2_D$ on $[0,T]$.
\end{proof}

\subsection{Quadratic case}

In this section, we study the existence and uniqueness theorem of equation \eqref{my1} with quadratic generator and bounded terminal condition. As in \cite{BH}, we introduce the following definition of deterministic flat solutions of mean-field BSDEs with mean reflection and nonlinear resistance in the quadratic case (see also \cite{HHLLW}).
\begin{definition}
A triple of processes $(Y,Z,K)\in\mathcal{S}^{\infty}\times BMO\times \mathcal{A}^{\infty}_D$
is said to be a deterministic solution to the mean-field BSDE \eqref{my1} with mean reflection and nonlinear resistance if it ensures that the equation \eqref{my1} holds true. A solution is said to be ``{\it flat}'' if moreover that
 $K$ increases only when needed, i.e., when we have
\begin{equation*}
		\int_0^T \mathbb{E}[\ell(t,Y_t)]  dK_t = 0.
\end{equation*}
\end{definition}

 Since $L_t(0)$ is continuous in $t$,  without loss of generality we assume that $|L_t(0)|\leq L$ for each $t\in[0,T]$, see \cite{BH}. The first main result of this section is on the existence and uniqueness of the local solution for the mean-field BSDE with mean reflection and nonlinear resistance, which reads as follows:
\begin{theorem}\label{Thm1q}
Assume that $(H^q_{\xi})-(H^q_f)-(H_G)-(H_{\ell})-(H_L)$ hold. Then, there exists a sufficiently large constant $\tilde{A}>0$ and a constant $\widehat{\delta}^{\tilde{A}}>0$ depending only on $\tilde{A}, C, L$ and $\lambda$, such that for any $T\in (0,\widehat{\delta}^{\tilde{A}}]$, the mean-field BSDE \eqref{my1} with mean reflection and nonlinear resistance admits a unique
 deterministic flat solution $(Y,Z,K)\in \mathcal{S}^{\infty}\times BMO\times\mathcal{A}^{\infty}_D$ such that
\[
\|Y\|_{\mathcal{S}^{\infty}}\leq \tilde{A},~~\|Z\|_{BMO}\leq\tilde{A},~\text{and}~\sup_{0\leq t\leq T}|K_t|\leq \tilde{A}.
\]
\end{theorem}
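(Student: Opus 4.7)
The plan is to mirror the Picard/contraction argument used for Theorem \ref{Thm1L}, but carried out in a closed ball of $\mathcal{S}^\infty\times BMO\times\mathcal{A}^\infty_D$ (rather than the whole space), and with the $BMO$/$\mathcal{S}^\infty$ machinery replacing the $\mathcal{H}^2$/$\mathcal{S}^2$ one. Set
\[
\mathcal{B}_{\tilde A}:=\Bigl\{(y,z,k)\in\mathcal{S}^\infty\times BMO\times\mathcal{A}^\infty_D:\ \|y\|_{\mathcal{S}^\infty}\vee\|z\|_{BMO}\vee\sup_{t\in[0,T]}|k_t|\leq\tilde A\Bigr\},
\]
and let $\Phi:(y,z,k)\mapsto(Y,Z,K)$ be the map that freezes $(\mathbb{E}[y_\cdot],\mathbb{E}[z_\cdot],G_\cdot(k))$ inside the generator and returns the unique deterministic flat solution of the resulting standard quadratic BSDE with mean reflection in $\mathcal{S}^\infty\times BMO\times\mathcal{A}^\infty_D$, whose existence is provided by \cite{HHLLW}. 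A fixed point of $\Phi$ is a deterministic flat solution of \eqref{my1}, and flatness transfers to the limit via the representation $K_T-K_t=\sup_{t\leq s\leq T}L_s(X_s)$ exactly as at the end of the proof of Theorem \ref{Thm1L}.

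The first block of work is to fix $\tilde A$ large and then choose $\widehat{\delta}^{\tilde A}$ so that $\Phi(\mathcal{B}_{\tilde A})\subset\mathcal{B}_{\tilde A}$. For the $\mathcal{S}^\infty$ bound I would apply It\^{o}'s formula to $e^{2\gamma|Y_t|}$ with $\gamma$ tuned to the quadratic-growth constant $\lambda$; the bounds $|\xi|\leq L$, $|f(\cdot,0,0,0,0,0)|\leq L$, the Lipschitz/subquadratic growth of the frozen arguments (with $|\mathbb{E}[z_s]|\leq (\mathbb{E}|z_s|^2)^{1/2}$ whose time integral is controlled by $\tilde A$ through the $BMO$ norm), and the bound $|L_t(0)|\leq L$ together with $(H_L)$ applied to the mean-reflection input, lead to an estimate of the form $\|Y\|_{\mathcal{S}^\infty}\leq \Lambda_0(L,C,\lambda)+\Lambda_1(\tilde A,C,L,\lambda)T$ with $\Lambda_0$ independent of $\tilde A$. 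The same computation, rearranged, yields the $BMO$ bound on $Z$, and the representation $K_T-K_t=\sup_{t\leq s\leq T}L_s(X_s)$ combined with $(H_L)$ yields the bound on $K$. Choosing $\tilde A$ strictly larger than $\Lambda_0$ (plus the $\tilde A$-independent contribution to $K$) and then $\widehat{\delta}^{\tilde A}$ small enough to absorb the remaining $T$-factors forces stability of the ball.

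The core and main obstacle is the contraction step, where the naive Burkholder--Davis--Gundy estimate used in the Lipschitz case fails because $|f^1_s-f^2_s|$ can grow like $(1+|Z^1_s|+|Z^2_s|)|\delta Z_s|$. Writing $\delta Y:=Y^1-Y^2$, $\delta Z:=Z^1-Z^2$, $\delta K:=K^1-K^2$, the inequality in $(H^q_f)$ permits the decomposition $f^1_s-f^2_s=\beta_s\cdot\delta Z_s+R_s$ with $|\beta_s|\leq\lambda(1+|Z^1_s|+|Z^2_s|)$ and $R$ collecting the Lipschitz pieces in $\delta y$, $\delta z$, $\delta k$ and the subquadratic $\mathbb{E}[z]$ increment. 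Since $Z^1,Z^2\in BMO$ with norm at most $\tilde A$, the Kazamaki criterion from \cite{K} makes $\mathscr{Exp}(\beta\cdot B)$ a true martingale and yields an equivalent probability $\mathbb{Q}$ under which $B-\int_0^\cdot\beta_s\,ds$ is Brownian; taking conditional $\mathbb{Q}$-expectations in
\[
\delta Y_t=\mathbb{E}_t^{\mathbb{Q}}\Bigl[\int_t^T R_s\,ds+(\delta K_T-\delta K_t)\Bigr]
\]
eliminates the quadratic part, and the standard $BMO$-energy inequality bounds the $\mathbb{Q}$-moment blow-up by a function of $\tilde A$ alone. Feeding the resulting estimate on $\delta Y$ back into $\delta K_T-\delta K_t=\sup_s L_s(X^1_s)-\sup_s L_s(X^2_s)$ via $(H_L)$, and into the martingale representation for $\delta Z$, produces an inequality of the form
\[
\|\delta Y\|_{\mathcal{S}^\infty}+\|\delta Z\|_{BMO}+\sup_t|\delta K_t|\leq \Psi(\tilde A,C,\lambda)\,\eta(T)\bigl(\|\delta y\|_{\mathcal{S}^\infty}+\|\delta z\|_{BMO}+\sup_t|\delta k_t|\bigr),
\]
with $\eta(T)\to 0$ as $T\to 0$. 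Shrinking $\widehat{\delta}^{\tilde A}$ once more so that $\Psi\eta(T)\leq 1/2$ gives the contraction, and Banach's theorem on the closed ball $\mathcal{B}_{\tilde A}$ delivers the unique fixed point with the stated bounds. The delicate point throughout is the order of quantifiers and the tracking of all constants as explicit functions of $\tilde A$, so that $\tilde A$ can be chosen first and $T$ shrunk consistently afterwards.
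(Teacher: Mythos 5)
Your proposal is correct and follows essentially the same route as the paper: a Banach fixed point for the frozen-coefficient map on the closed ball $\mathscr{B}_{\tilde A}$, with ball invariance and contraction obtained via BMO-linearization and Girsanov, the $(H_L)$ estimate applied to $K_T-K_t=\sup_{t\leq s\leq T}L_s(X_s)$, and flatness passed to the limit exactly as in the Lipschitz case. The only harmless deviations are that the paper also freezes the first argument $y$ in the auxiliary equation \eqref{1q} and obtains the a priori $\mathcal{S}^{\infty}$ and $BMO$ bounds by linearization and the technique of \cite{BH1} rather than by an exponential transform.
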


Indeed, under assumptions $(H^q_{\xi})-(H^q_f)-(H_G)-(H_{\ell})-(H_L)$, for each  $(y,z,k)\in \mathcal{S}^{\infty}\times BMO\times\mathcal{A}^{\infty}_D$,  it follows from \cite[Theorem 2.4]{HHLLW} that the following quadratic BSDE with mean reflection
\begin{align}\label{1q}
\begin{cases}
&Y_t^{y,z,k}=\xi+\int^T_t f(s,y_s,\mathbb{P}_{y_s},Z_s^{y,z,k},\mathbb{P}_{z_s},G_s(k))ds-\int^T_t Z_s^{y,z,k} dB_s+K_T^{y,z,k}-K_t^{y,z,k};\\
& \ \ \ \ \ \ \ \ \ \ \ \ \ \ \ \ \ \ \ \ \ \ \ \mathbb{E}[\ell(t,Y_t^{y,z,k})]\geq 0,
\end{cases} \  \  t\in [0, T],
\end{align}
has a unique  deterministic flat solution  $(Y^{y,z,k},Z^{y,z,k},K^{y,z,k})\in \mathcal{S}^{\infty}\times BMO\times\mathcal{A}^{\infty}_D$.  Then we define the solution map
$\Gamma: (y,z,k)\rightarrow\Gamma(y,z,k)$ by
\[
\Gamma(y,z,k):=(Y^{y,z,k},Z^{y,z,k},K^{y,z,k}), \ \forall (y,z,k)\in \mathcal{S}^{\infty}\times BMO\times\mathcal{A}^{\infty}_D.
\]

In order to show that $\Gamma$ is contractive, for each real number $\tilde{A}\geq \tilde{A}_0$ we consider the following set:
\begin{eqnarray*}
  \mathscr{B}_{\tilde{A}}:=\left\{ (y,z,k)\in \mathcal{S}^{\infty}\times BMO\times\mathcal{A}^{\infty}_D: \|y\|_{\mathcal{S}^{\infty}}\leq \tilde{A}, \|z\|_{BMO}\leq \tilde{A},  \sup_{0\leq t\leq T}|k_t|\leq \tilde{A} \right\},
\end{eqnarray*}
where \begin{equation}\label{2q}
\tilde{A}_0=(4+3C)L+\left(1+C\lambda\right)\left(1+\frac{3L}{\lambda}\right)e^{9\lambda L}.
\end{equation}

\begin{lemma}\label{Lem1q}
Assume that $(H^2_{\xi})-(H^q_f)-(H_G)-(H_{\ell})-(H_L)$ hold and $\tilde{A}\geq \tilde{A}_0$, where $\tilde{A}_0$ is defined by \eqref{2q}.  Then there is a constant $\delta^{\tilde{A}}>0$ depending only on $L, \lambda, C$ and $\tilde{A}$ such that for any $T\in (0,\delta^{\tilde{A}}]$,
$\Gamma(\mathscr{B}_{\tilde{A}})\subset \mathscr{B}_{\tilde{A}}$.
\end{lemma}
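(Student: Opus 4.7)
Given $(y,z,k)\in \mathscr{B}_{\tilde{A}}$, the BSDE \eqref{1q} is a quadratic reflected BSDE whose driver $\tilde{f}(s,Z):=f(s,y_s,\mathbb{E}[y_s],Z,\mathbb{E}[z_s],G_s(k))$ is quadratic only in $Z$, with the remaining arguments appearing as prescribed bounded or BMO-integrable processes. Existence and uniqueness of a deterministic flat solution $(Y^{y,z,k},Z^{y,z,k},K^{y,z,k})$ in $\mathcal{S}^{\infty}\times BMO\times \mathcal{A}^{\infty}_D$ is already guaranteed by \cite[Theorem 2.4]{HHLLW}, so the plan reduces to deriving the three a priori bounds $\|Y^{y,z,k}\|_{\mathcal{S}^{\infty}}\le\tilde{A}$, $\|Z^{y,z,k}\|_{BMO}\le\tilde{A}$, and $\sup_t|K^{y,z,k}_t|\le\tilde{A}$, and then choosing $\delta^{\tilde{A}}$ small enough to close them.

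\textbf{Bound on $Y^{y,z,k}$.} The plan is to apply a Kobylanski-type exponential transform: Itô's formula for $e^{\gamma Y^{y,z,k}_t}$ with $\gamma\ge 2\lambda$ so that the negative $\tfrac{\gamma^2}{2}e^{\gamma Y}|Z|^2$-term absorbs the quadratic growth of $\tilde{f}$ in $Z$. Using $(H^q_f)$ together with $\|y\|_{\mathcal{S}^\infty}\le\tilde{A}$, $\|\mathbb{E}[y]\|_{\infty}\le\tilde{A}$, $\|G(k)\|_{\infty}\le C\tilde{A}$, the linear $|Z^{y,z,k}|$ term can be moved to the left via Young's inequality. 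The $\mathbb{E}[z_s]$-dependent terms, being subquadratic in $\bar{z}$ with exponent $1+\alpha<2$, are controlled by Hölder's inequality against $\int_0^T|\mathbb{E}[z_s]|^2ds\le T\|z\|_{BMO}^2\le T\tilde{A}^2$. Taking conditional expectations, using $|\xi|\le L$ and $dK^{y,z,k}\ge 0$, and then symmetrizing with $e^{-\gamma Y^{y,z,k}}$, I expect an estimate of the form
\begin{equation*}
\|Y^{y,z,k}\|_{\mathcal{S}^{\infty}}\le \bigl(1+C\lambda\bigr)\bigl(1+\tfrac{3L}{\lambda}\bigr)e^{9\lambda L}+R_1(T,\tilde{A}),
\end{equation*}
where $R_1(T,\tilde{A})\to0$ as $T\to 0$; the leading term is precisely the exponential piece of $\tilde{A}_0$.

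\textbf{Bounds on $Z^{y,z,k}$ and $K^{y,z,k}$.} For $Z^{y,z,k}$, I apply Itô to $\Psi(Y^{y,z,k})=\frac{1}{\gamma^{2}}\bigl(e^{\gamma Y^{y,z,k}}-\gamma Y^{y,z,k}-1\bigr)$ between a stopping time $\tau$ and $T$: the nonnegative function $\Psi$ produces a $\tfrac{1}{2}e^{\gamma Y^{y,z,k}}|Z^{y,z,k}|^2$ integrand on one side, and on the other one gets $\xi$, the driver, and the monotone $dK^{y,z,k}$. Taking $\mathbb{E}_\tau[\cdot]$ and dividing by $e^{-\gamma\|Y^{y,z,k}\|_{\mathcal{S}^\infty}}$ gives a BMO bound for $Z^{y,z,k}$ in terms of $\|Y^{y,z,k}\|_{\mathcal{S}^\infty}$, $\sup_t K^{y,z,k}_t$, and $T$. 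For $K^{y,z,k}$, the flat representation $K^{y,z,k}_T-K^{y,z,k}_t=\sup_{t\le s\le T}L_s(X_s)$ with $X_s=\mathbb{E}_s[\xi+\int_s^T\tilde{f}(r,Z^{y,z,k}_r)dr]$ combined with $(H_L)$ yields $|L_s(X_s)|\le L+C\mathbb{E}[|X_s|]$, and $\mathbb{E}[|X_s|]$ is estimated from $|\xi|\le L$, the $(H^q_f)$ growth, and the prior $\mathcal{S}^\infty$ and BMO bounds, producing something of order $(4+3C)L$ plus $T$-dependent corrections, consistent with the first summand of $\tilde{A}_0$ in \eqref{2q}.

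\textbf{Closing the bounds and main obstacle.} With $\tilde{A}\ge\tilde{A}_0$ fixed, choose $\delta^{\tilde{A}}$ so small that the remainders $R_i(T,\tilde{A})$ in each of the three estimates above are, say, at most $\tilde{A}-\tilde{A}_0$; this ensures $\Gamma(\mathscr{B}_{\tilde{A}})\subset\mathscr{B}_{\tilde{A}}$. The delicate point is the coupling between the bounds: the BMO estimate on $Z^{y,z,k}$ uses $\|Y^{y,z,k}\|_{\mathcal{S}^\infty}$ (through $e^{\gamma\|Y\|_\infty}$) and $K^{y,z,k}_T$, while the $K$-estimate in turn uses $\|Z^{y,z,k}\|_{BMO}$ via the quadratic $|Z|^2$-term of $\tilde{f}$; avoiding a circular or exponentially blown-up constant forces one to absorb the quadratic contribution carefully via $\gamma\ge 2\lambda$ in the $Y$-step, then feed the resulting $\mathcal{S}^\infty$-bound into a single, non-iterated $Z$- and $K$-estimate. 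Handling the subquadratic $\mathbb{E}[z]$-dependence without a pointwise control on $\mathbb{E}[z_s]$ is the other technical hurdle; here the BMO bound $\|z\|_{BMO}\le\tilde{A}$ supplies only integrated estimates, which is why the $T^{1/2}$- or $T$-factors appear in the remainders and hence in the definition of $\delta^{\tilde{A}}$.
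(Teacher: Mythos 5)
Your proposal assembles the right ingredients (the flat representation $K_T-K_t=\sup_{t\le s\le T}L_s(X_s)$, assumption $(H_L)$, exponential/BMO estimates, smallness of $T$), but the circular coupling between the $Y$-, $Z$- and $K$-estimates that you flag at the end is not actually resolved by the argument you outline, and as written the $Y$-step fails. Since $dY^{y,z,k}_t=-f\,dt+Z^{y,z,k}_t\,dB_t-dK^{y,z,k}_t$, It\^o's formula gives
$e^{\gamma Y^{y,z,k}_t}=e^{\gamma\xi}+\int_t^T\gamma e^{\gamma Y^{y,z,k}_s}f\,ds-\int_t^T\gamma e^{\gamma Y^{y,z,k}_s}Z^{y,z,k}_s\,dB_s+\int_t^T\gamma e^{\gamma Y^{y,z,k}_s}\,dK^{y,z,k}_s-\tfrac{\gamma^2}{2}\int_t^Te^{\gamma Y^{y,z,k}_s}|Z^{y,z,k}_s|^2\,ds$,
so the compensator enters with a \emph{nonnegative} sign and cannot be discarded: ``using $dK^{y,z,k}\ge0$'' only works in the symmetric $e^{-\gamma Y^{y,z,k}}$ computation, i.e.\ it yields the \emph{lower} bound on $Y^{y,z,k}$, while the upper bound requires a prior control of $K^{y,z,k}_T$. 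Likewise your $\Psi$-based BMO estimate for $Z^{y,z,k}$ carries a term $\int\Psi'(Y^{y,z,k})\,dK^{y,z,k}$ and hence also needs $K^{y,z,k}_T$, while your $K$-estimate needs $\|Z^{y,z,k}\|_{BMO}$ through the quadratic term of the driver. The cycle $Y\to Z\to K\to Y$ is therefore closed, and choosing $\gamma\ge2\lambda$ does nothing to break it.

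The paper breaks the cycle with a structural decomposition you do not use: $(Y^{y,z,k}_t,Z^{y,z,k}_t)=(\overline Y_t+(K^{y,z,k}_T-K^{y,z,k}_t),\overline Z_t)$, where $(\overline Y,\overline Z)$ solves the \emph{unreflected} BSDE \eqref{3q} with the same frozen driver; this is legitimate because the driver of \eqref{1q} depends on the unknown only through $Z$ and the shift $K^{y,z,k}_T-K^{y,z,k}_\cdot$ is deterministic, hence does not alter the martingale part. One then bounds $\overline Y$ first (Girsanov linearization in $\overline Z$, no $K$ present), deduces the BMO bound on $Z^{y,z,k}=\overline Z$ from the $\overline Y$-bound as in \cite[Proposition 2.1]{BH1}, then bounds $K^{y,z,k}_T-K^{y,z,k}_t=\sup_{t\le s\le T}L_s(X_s)\le L+C\,\sup_s\mathbb E[|X_s|]$ using $(H_L)$ and the just-obtained BMO bound, and finally gets $\|Y^{y,z,k}\|_{\mathcal S^\infty}\le\|\overline Y\|_{\mathcal S^\infty}+K^{y,z,k}_T$. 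This is a linear chain of estimates; your outline needs to be reorganized around this decomposition (or an equivalent observation identifying $Z^{y,z,k}$ with the control of the unreflected equation) before the choice of $\delta^{\tilde A}$ can close the three bounds.
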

\begin{proof}
In order to simplify notations, we will denote
\begin{align*}
Y:=Y^{y,z,k},\quad Z:=Z^{y,z,k}, \quad K:=K^{y,z,k}.
\end{align*}
It is easy to verify that for each $t\in[0,T]$,
\[
(Y_t,Z_t)=(\overline{Y}_t+(K_T-K_t),\overline{Z}_t),
\]
where $(\overline{Y}, \overline{Z})\in \mathcal{S}^{\infty}\times BMO$ is the solution to the following standard BSDE on the time interval $[0,T]$
\begin{align}\label{3q}
\overline{Y}_t=\xi+\int^T_t f(s,y_s,\mathbb{P}_{y_s},\overline{Z}_s,\mathbb{P}_{z_s},G_s(k))ds-\int^T_t \overline{Z}_s dB_s,
\end{align}
and for each $t\in[0,T]$,
\begin{equation*}
	K_T-K_t= \sup_{t\leq s\leq T} L_s(X_s)
\end{equation*}
with
\begin{equation*}
 X_t := \mathbb{E}_t\left[\xi+\int^T_t f(s,y_{s},\mathbb{P}_{y_{s}},Z_{s},\mathbb{P}_{z_{s}},G_{s}(k))ds\right].
 \end{equation*}
 Consequently, we obtain that
 \begin{align}\label{4q}
 \|{Y}\|_{\mathcal{S}^{\infty}}\leq \|\overline{Y}\|_{\mathcal{S}^{\infty}}+ \sup_{0\leq s\leq T}L_s(X_s).
  \end{align}
The remainder of the proof will be in two steps.

{\it Step 1. The estimate of  $\overline{Y}$.}
Since $\overline{Z}\in BMO $, we can find a vector process $\beta\in BMO$  such that
\begin{eqnarray*}
f(s,y_s,\mathbb{P}_{y_s},\overline{Z}_s,\mathbb{P}_{z_s},G_s(k))-f(s,y_s,\mathbb{P}_{y_s},0,\mathbb{P}_{z_s},G_s(k))=\overline{Z}_s \beta_s, \ \forall s\in[0,T].
\end{eqnarray*}

Then $\widetilde {B}_t:=B_t-\int_{0}^t\beta_sds$, defines a Brownian motion under the equivalent probability measure $\widetilde{\mathbb{P}}$ given by
$$d\widetilde{\mathbb{P}}: =\mathscr{Exp} (\beta\cdot B)_{T} d\mathbb{P}.$$
Thus by the equation \eqref{3q}, we have
\begin{align*}
\overline{Y}_t&=\mathbb{E}^{\widetilde{\mathbb{P}}}_t\left[\xi+\int^T_t f(s,y_s,\mathbb{P}_{y_s},0,\mathbb{P}_{z_s},G_s(k))ds\right]\\
&\leq \mathbb{E}^{\widetilde{\mathbb{P}}}_t\left[\xi+\int^T_t \left(|f(s,0,0,0,0)|+\lambda\left(|y_s|+\mathbb{E}[|y_s|^2]^{\frac{1}{2}}+\mathbb{E}[|z_s|^2]^{\frac{1}{2}}+\mathbb{E}[|z_s|^2]^{\frac{1+\alpha}{2}}+|G_s(k)|\right)\right)ds\right]\\
&\leq (1+T)L+\lambda T\left(2\|y\|_{\mathcal{S}^{\infty}}+\sup_{0\leq t\leq T}|k_t|\right)+\lambda\int_t^T\mathbb{E}[|z_s|^2]^{\frac{1}{2}}ds+\lambda\int_t^T\mathbb{E}[|z_s|^2]^{\frac{1+\alpha}{2}}ds\\
&\leq (1+T)L+\lambda T\left(2\|y\|_{\mathcal{S}^{\infty}}+\sup_{0\leq t\leq T}|k_t|\right)+\lambda\sqrt{T}\left(\int_t^T\mathbb{E}[|z_s|^2]ds\right)^{\frac{1}{2}}+\lambda T^{\frac{1-\alpha}{2}}\left(\int_t^T\mathbb{E}[|z_s|^2]ds\right)^{\frac{1+\alpha}{2}}
\end{align*}
which implies that
\begin{align}\label{5q}
\|\overline{Y}\|_{\mathcal{S}^{\infty}}&\leq (1+T)L+\lambda T\left(2\|y\|_{\mathcal{S}^{\infty}}+\sup_{0\leq t\leq T}|k_t|\right)+\lambda\sqrt{T}\|z\|_{BMO}+\lambda T^{\frac{1-\alpha}{2}}\|z\|_{BMO}^{1+\alpha}\notag\\
&\leq (1+T)L+3\lambda T\tilde{A}+\lambda\sqrt{T}\tilde{A}+\lambda T^{\frac{1-\alpha}{2}}\tilde{A}^{1+\alpha}.
\end{align}
 Thus it follows from a similar technique as in \cite[Proposition 2.1]{BH1} that
\begin{align}\label{6q}
\|Z\|_{BMO}^2&\leq \frac{2e^{3\lambda\|\overline{Y}\|_{\mathcal{S}^{\infty}}}}{3}\left(\frac{(1+T)L}{\lambda}+ T\left(2\|y\|_{\mathcal{S}^{\infty}}+\sup_{0\leq t\leq T}|k_t|+\frac{1}{2}\right)+\sqrt{T}\|z\|_{BMO}+ T^{\frac{1-\alpha}{2}}\|z\|_{BMO}^{1+\alpha}\right)\notag\\
&\leq \frac{2e^{3\lambda\|\overline{Y}\|_{\mathcal{S}^{\infty}}}}{3}\left(\frac{(1+T)L}{\lambda}+ 3T\tilde{A}+\frac{T}{2}+\sqrt{T}\tilde{A}+ T^{\frac{1-\alpha}{2}}\tilde{A}^{1+\alpha}\right).
\end{align}
{\it Step 2. The estimate of $Y$.}
Thanks to the Assumption $(H_L)$, for each $s\in[0,T]$ we have\[
|L_s(X_s)-L_s(0)|\leq C\mathbb{E}[|X_s|].
\]
 Therefore from the definition $X$ and Assumption ($H^q_f$) we deduce that
 \begin{align}\label{7q}
 \sup_{0\leq s\leq T}L_s(X_s)\leq &L+ C\sup_{0\leq s\leq T}\mathbb{E}\left[|\xi|+\int^T_s |f(r,y_r,\mathbb{P}_{y_r},Z_r,\mathbb{P}_{z_r},G_r(k))|dr\right]\nonumber\\
 &\leq L+ C\mathbb{E}\left[|\xi|+\int^T_0 \left(|f(r,y_r,\mathbb{P}_{y_r},0,\mathbb{P}_{z_r},G_r(k))|+\frac{1}{2}\lambda+\frac{3}{2}\lambda|Z_r|^2\right)dr\right]\nonumber\\
 \leq & (1+C+CT)L+C\lambda T\left(2\|y\|_{\mathcal{S}^{\infty}}+\sup_{0\leq t\leq T}|k_t|+\frac{1}{2}\right)\notag\\
 &\quad+C\lambda\sqrt{T}\|z\|_{BMO}+C\lambda T^{\frac{1-\alpha}{2}}\|z\|_{BMO}^{1+\alpha}+\frac{3}{2}C\lambda\|Z\|_{BMO}^2\notag\\
  \leq & (1+C+CT)L+3C\lambda T\tilde{A}+\frac{C\lambda T}{2}+C\lambda\sqrt{T}\tilde{A}+C\lambda T^{\frac{1-\alpha}{2}}\tilde{A}^{1+\alpha}+\frac{3}{2}C\lambda\|Z\|_{BMO}^2.
 \end{align}
Then we define
\begin{equation}\label{8q}
\delta^{\tilde{A}}:=\min\left(\frac{L}{9\lambda \tilde{A}},\frac{L^2}{9\lambda^2 \tilde{A}^2},\left(\frac{L}{3\lambda \tilde{A}^{1+\alpha}}\right)^{\frac{2}{1-\alpha}}\right).
\end{equation}
Recalling equations \eqref{4q}, \eqref{5q}, \eqref{6q} and \eqref{7q}, we derive that for each $T\in(0, \delta^{\tilde{A}}]$,
\begin{align*}
&\|\overline{Y}\|_{\mathcal{S}^{\infty}}\leq 3L,\quad \|Z\|_{BMO}\leq\sqrt{\left(\frac{2L}{\lambda}+\frac{1}{3}\right)e^{9\lambda L}}\leq \tilde{A},\\
&\|K\|=K_T= \sup_{0\leq s\leq T}L_s(X_s)\leq (1+3C)L+C(\lambda+3L)e^{9\lambda L}\leq \tilde{A},\\
&\|Y\|_{\mathcal{S}^{\infty}}\leq \|\overline{Y}\|_{\mathcal{S}^{\infty}}+\sup_{0\leq s\leq T}L_s(X_s)\leq (4+3C)L+C(\lambda+3L)e^{9\lambda L}\leq \tilde{A}
\end{align*}
which is the desired result.
\end{proof}

Now we  show the contractive property of the solution map $\Gamma$.
\begin{lemma} \label{Lem2q}
Assume that $(H^q_{\xi})-(H^q_f)-(H_G)-(H_{\ell})-(H_L)$ hold and $\tilde{A}\geq \tilde{A}_0$, where $\tilde{A}_0$ is defined by \eqref{2q}. Then there exists a constant $\widehat{\delta}^{\tilde{A}}$ such that $0<\widehat{\delta}^{\tilde{A}}\leq\delta^{\tilde{A}}$ and for any $T\in (0,\widehat{\delta}^{\tilde{A}}]$, we have
\begin{align*}
&\|Y^{y^1,z^1,k^1}-Y^{y^2,z^2,k^2}\|_{\mathcal{S}^{\infty}}+\|Z^{y^1,z^1,k^1}-Z^{y^2,z^2,k^2}\|_{BMO}+\sup_{0\leq t\leq T}|K^{y^1,z^1,k^1}_t-K^{y^2,z^2,k^2}_t|\\
&\leq\frac{1}{2}\left(\|y^1-y^2\|_{\mathcal{S}^{\infty}}+\|z^1-z^2\|_{BMO}+\sup_{0\leq t\leq T}|k^1_t-k^2_t|\right),\quad \forall (y^1,z^1,k^1),(y^2,z^2,k^2)\in \mathscr{B}_{\tilde{A}}.
\end{align*}
\end{lemma}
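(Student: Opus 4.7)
The plan is to exploit, as in Lemma \ref{Lem1q}, the decomposition
\[
Y^{y^i,z^i,k^i}_t = \overline{Y}^i_t + (K^i_T - K^i_t), \qquad Z^{y^i,z^i,k^i}_t = \overline{Z}^i_t,
\]
where $(\overline{Y}^i,\overline{Z}^i)$ solves the standard (unreflected) quadratic BSDE
\eqref{3q} with data $(y^i,z^i,k^i)$, and
\[
K^i_T - K^i_t = \sup_{t\le s\le T} L_s(X^i_s), \qquad X^i_t=\mathbb{E}_t\!\left[\xi+\int_t^T f(s,y^i_s,\mathbb{E}[y^i_s],Z^i_s,\mathbb{E}[z^i_s],G_s(k^i))\,ds\right].
\]
The contraction estimate is thus reduced to controlling the three pieces $\overline{Y}^1-\overline{Y}^2$, $\overline{Z}^1-\overline{Z}^2$, and $K^1-K^2$ separately; the main point will be that each of the resulting bounds carries a positive power of $T$, so that for $T$ small enough the constants collapse to the contraction factor $\tfrac12$.

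For the BSDE piece I would write $f^1_s-f^2_s$ as a telescoping sum, isolating the difference in $\overline{Z}$ from the differences in the remaining arguments. The sub\-quadratic-in-$z$ growth in $(H^q_f)$ allows me to write the $\overline{Z}$-part as $\beta^{12}_s(\overline{Z}^1_s-\overline{Z}^2_s)$ for a progressively measurable $\beta^{12}$ with $|\beta^{12}_s|\le \lambda(1+|\overline{Z}^1_s|+|\overline{Z}^2_s|)$. Because $(y^i,z^i,k^i)\in\mathscr{B}_{\tilde A}$, the a priori bound from Lemma \ref{Lem1q} yields $\|\overline{Z}^i\|_{BMO}\le \tilde A$, so $\beta^{12}\in BMO$ with a norm depending only on $\tilde A$ and $\lambda$. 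Thus $\mathscr{E}xp(\beta^{12}\cdot B)_T$ is a true martingale and under the associated measure $\widetilde{\mathbb{P}}^{12}$ the difference reads
\[
\overline{Y}^1_t-\overline{Y}^2_t = \mathbb{E}^{\widetilde{\mathbb{P}}^{12}}_t\!\left[\int_t^T R_s\,ds\right],
\]
with a remainder $R_s$ bounded, by $(H^q_f)$ and $(H_G)$, by
$\lambda\bigl(|y^1_s-y^2_s|+\mathbb{E}|y^1_s-y^2_s|+(1+|\mathbb{E}[z^1_s]|^\alpha+|\mathbb{E}[z^2_s]|^\alpha)|\mathbb{E}[z^1_s-z^2_s]|+\|k^1-k^2\|\bigr)$. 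Integrating in time and using Hölder together with $\|z^i\|_{BMO}\le\tilde A$ gives $\|\overline{Y}^1-\overline{Y}^2\|_{\mathcal{S}^\infty}\lesssim (T+\sqrt T+T^{(1-\alpha)/2})\Psi$, where $\Psi:=\|y^1-y^2\|_{\mathcal{S}^\infty}+\|z^1-z^2\|_{BMO}+\|k^1-k^2\|$ and the implicit constant depends only on $C,L,\lambda,\tilde A$.

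For the $BMO$ bound on $\Delta\overline{Z}:=\overline{Z}^1-\overline{Z}^2$, I would apply It\^o's formula to $|\Delta\overline{Y}|^2$ under $\mathbb{P}$ and take conditional expectation at a stopping time $\tau$ to get
\[
\mathbb{E}_\tau\!\left[\int_\tau^T |\Delta\overline{Z}_s|^2\,ds\right] \le 2\|\Delta\overline{Y}\|_{\mathcal{S}^\infty}\,\mathbb{E}_\tau\!\left[\int_\tau^T |f^1_s-f^2_s|\,ds\right].
\]
Splitting $|f^1_s-f^2_s|$ as before and using $\mathbb{E}_\tau\!\int_\tau^T(1+|\overline{Z}^i_s|^2)ds\le T+\tilde A^2$ with a standard $BMO$--energy inequality, the right-hand side is again controlled by $(T+\sqrt T+T^{(1-\alpha)/2})\Psi$ times a constant depending only on $C,L,\lambda,\tilde A$. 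For $K^1-K^2$ I would repeat the argument from \eqref{2L} in Lemma \ref{Lem1L}: $(H_L)$ gives $\sup_{t}|(K^1_T-K^1_t)-(K^2_T-K^2_t)|\le C\sup_s\mathbb{E}|X^1_s-X^2_s|$, and the integrand in $X^i$ is bounded in the same fashion as above, using the extra quadratic-in-$Z$ part $\tfrac32\lambda|Z^i_s|^2$ controlled by the already-established bound on $\|\Delta Z\|_{BMO}^2$ and $\|Z^i\|_{BMO}\le\tilde A$.

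Adding the three estimates gives
\[
\|\Delta Y\|_{\mathcal{S}^\infty}+\|\Delta Z\|_{BMO}+\sup_{0\le t\le T}|\Delta K_t| \le C(\tilde A)\bigl(T+\sqrt T+T^{(1-\alpha)/2}\bigr)\Psi,
\]
and choosing $\widehat{\delta}^{\tilde A}\in(0,\delta^{\tilde A}]$ small enough that $C(\tilde A)(\widehat{\delta}^{\tilde A}+\sqrt{\widehat{\delta}^{\tilde A}}+(\widehat{\delta}^{\tilde A})^{(1-\alpha)/2})\le \tfrac12$ yields the claim. The hard part will be the $BMO$-control of $\Delta\overline{Z}$: one must transfer the quadratic contribution of the difference $\beta^{12}\Delta\overline{Z}$ into a small factor times $\|\Delta Z\|_{BMO}^2$ and absorb it into the left-hand side, using the a priori bound $\|\overline{Z}^i\|_{BMO}\le\tilde A$ from Lemma \ref{Lem1q} and an energy inequality for $BMO$ martingales, so that the resulting constant depends only on $(C,L,\lambda,\tilde A)$ and not on the solutions themselves.
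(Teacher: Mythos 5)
Your proposal is correct and follows essentially the same route as the paper: the same decomposition $(Y^i,Z^i)=(\overline{Y}^i+(K^i_T-K^i_\cdot),\overline{Z}^i)$, the same Girsanov linearization for $\overline{Y}^1-\overline{Y}^2$, the same It\^o/BMO energy argument with absorption of the $\tfrac14\|\Delta Z\|_{BMO}^2$ term, and the same use of $(H_L)$ with H\"older and $\|z^i\|_{BMO},\|Z^i\|_{BMO}\le\tilde A$ to extract positive powers of $T$. The only cosmetic difference is that the paper bounds $\sup_t|K^1_t-K^2_t|$ through the forward representation $K_t=Y_0-Y_t-\int_0^t f\,ds+\int_0^t Z\,dB$ rather than directly from the $\sup_s L_s(X_s)$ formula, which changes nothing essential.
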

\begin{proof}
In order to simplify notations, for each $i=1,2$, we will denote
\begin{align*}
&(Y^{i},Z^i,K^i)=\Gamma(y^i,z^i,k^i),\\
&f^i_{\cdot}=f({\cdot},y^{i}_{\cdot},\mathbb{P}_{y^i_{\cdot}},Z_{\cdot}^{i},\mathbb{P}_{z^i_{\cdot}},G_{\cdot}(k^i)).
\end{align*}
Again it is easy to verify that for each $t\in[0,T]$,
\begin{equation}\label{9q}
(Y^{i}_t, Z^{i}_t)=(\overline{Y}^{i}_t+(K^{i}_T-K^{i}_t),\overline{Z}^{i}_t),
\end{equation}
where $(\overline{Y}^{i}, \overline{Z}^{i})\in \mathcal{S}^{\infty}\times BMO $ is the solution to the following standard BSDE on the time interval $[0,T]$
\begin{align}\label{10q}
\overline{Y}^i_t=\xi+\int^T_t f(s,y^i_s,\mathbb{P}_{y^i_s},\overline{Z}^i_s,\mathbb{P}_{z^i_s},G_s(k^i))ds-\int^T_t \overline{Z}^i_s dB_s.
\end{align}
Since for each $t\in[0,T]$,
\begin{equation*}
	K^{i}_T-K^{i}_t= \sup_{t\leq s\leq T} L_s(X_s^{i})
\end{equation*}
with $X_t^{i}:=\mathbb{E}_t\left[\xi+\int^T_tf^i_sds\right]$, we have
 \begin{align}\label{11q}
&\sup_{0\leq t\leq T} |(K^{1}_T-K^{1}_t)-(K^{2}_T-K^{2}_t)|\notag\\
&\leq  \sup_{0\leq s\leq T}|L_s(X_s^{1})-L_s(X_s^{2})|\\
 &\leq C\lambda\mathbb{E}\left[\int^T_{0} (|y^{1}_r-y^{2}_r|+\mathbb{E}[|y^1_r-y^2_r|^2]^{\frac{1}{2}}+|G_r(k^1)-G_r(k^2)|)dr\right]\notag\\
 &\quad + C\lambda\mathbb{E}\left[\int^T_{0} ((1+|Z_r^{1}|+|Z_r^{2}|)|Z_r^{1}-Z_r^{2}|+(1+\mathbb{E}[|z^1_r|^2]^{\frac{\alpha}{2}}+\mathbb{E}[|z^2_r|^2]^{\frac{\alpha}{2}})\mathbb{E}[|z^1_r-z^2_r|^2]^{\frac{1}{2}})dr\right]\notag
\end{align}
 Applying H\"older's inequality, we obtain
 \begin{align}\label{12q}
  &\mathbb{E}\left[\int^T_{0}(1+|Z_r^{1}|+|Z_r^{2}|)|Z_r^{1}-Z_r^{2}|dr\right]\nonumber\\
  &\leq \sqrt{3} \mathbb{E}\left[\int^T_{0}(1+|Z_r^{1}|^2+|Z_r^{2}|^2)dr\right]^{\frac{1}{2}}\|Z^{1}-Z^{2}\|_{BMO}
\\&\leq \sqrt{3+6\tilde{A}^2}\|Z^{1}-Z^{2}\|_{BMO}.\notag
\end{align}
On the other hand, applying H\"older's inequality, it holds that
\begin{align}\label{13q}
 &\mathbb{E}\left[\int^T_{0} (1+\mathbb{E}[|z^1_r|^2]^{\frac{\alpha}{2}}+\mathbb{E}[|z^2_r|^2]^{\frac{\alpha}{2}})\mathbb{E}[|z^1_r-z^2_r|^2]^{\frac{1}{2}}dr\right]\notag\\
 &\leq \left[\int^T_{0} (1+\mathbb{E}[|z^1_r|^2]^{\frac{\alpha}{2}}+\mathbb{E}[|z^2_r|^2]^{\frac{\alpha}{2}})^2dr\right]^{\frac{1}{2}}\left[\int^T_{0}\mathbb{E}[|z^1_r-z^2_r|^2]dr\right]^{\frac{1}{2}}\notag\\
 &\leq \sqrt{3}\left[\int^T_{0} (1+\mathbb{E}[|z^1_r|^2]^{\alpha}+\mathbb{E}[|z^2_r|^2]^{\alpha})dr\right]^{\frac{1}{2}}\|z^1-z^2\|_{BMO}\\
 &\leq \sqrt{3}\left[T+T^{1-\alpha}\left(\int^T_{0} \mathbb{E}[|z^1_r|^2]\right)^{\alpha}dr+T^{1-\alpha}\left(\int^T_{0} \mathbb{E}[|z^2_r|^2]\right)^{\alpha}dr\right]^{\frac{1}{2}}\|z^1-z^2\|_{BMO}\notag\\
 &\leq \sqrt{3T+6 T^{1-\alpha}\tilde{A}^{2\alpha}}\|z^1-z^2\|_{BMO}.\notag
\end{align}
We recall the representation \eqref{9q} and conclude that
  \begin{align}\label{14q}
  \|{Y}^{1}-{Y}^{2}\|_{\mathcal{S}^{\infty}}&\leq  \|\overline{Y}^{1}-\overline{Y}^{2}\|_{\mathcal{S}^{\infty}}+ \sup_{0\leq t\leq T} |(K^{1}_T-K^{1}_t)-(K^{2}_T-K^{2}_t)|\notag\\
  &\leq  \|\overline{Y}^{1}-\overline{Y}^{2}\|_{\mathcal{S}^{\infty}}+ C\lambda T \left(2\|{y^{1}}-y^{2}\|_{\mathcal{S}^{\infty}}+\sup_{0\leq t\leq T}|k^1_t-k^2_t|\right)\\
  &\quad+C\lambda\left(\sqrt{3+6\tilde{A}^2}\|Z^{1}-Z^{2}\|_{BMO}+\sqrt{3T+6 T^{1-\alpha}\tilde{A}^{2\alpha}}\|z^1-z^2\|_{BMO}\right).
   \nonumber\end{align}
The remainder of the proof will be divided in two steps.

{\it Step 1. The estimate of $\|\overline{Y}^{1}-\overline{Y}^{2}\|_{\mathcal{S}^{\infty}}$.}
By the linearization argument, we can find a vector process $\widehat{\beta}\in BMO$ such that
\begin{align*}
f(s,y^{1}_s,\mathbb{P}_{y^1_s}, \overline{Z}^{1}_s,\mathbb{P}_{z^1_s},G_s(k^1))-f(s,y^{1}_s,\mathbb{P}_{y^1_s}, \overline{Z}^{2}_s,\mathbb{P}_{z^1_s},G_s(k^1))=  (\overline{Z}^{1}_s-\overline{Z}^{2}_s)\widehat{\beta}_s.
\end{align*}
Then $\widehat{B}_t:=B_t-\int_{0}^t\widehat{\beta}_sds$  defines a Brownian motion under the equivalent probability measure $\widehat{\mathbb{P}}$ given by
$$d\widehat{\mathbb{P}}: =\mathscr{Exp} (\widehat{\beta}\cdot B)_{T} d\mathbb{P}.$$
Thus by equation \eqref{10q}, we have
\begin{align*}
\overline{Y}^{1}_t-\overline{Y}^{2}_t&=\mathbb{E}^{\widehat{\mathbb{P}}}_t\left[\int^T_t (f(s,y^{1}_s,\mathbb{P}_{y^1_s},\overline{Z}^{2}_s,\mathbb{P}_{z^1_s},G_s(k^1))-f(s,y^{2}_s,\mathbb{P}_{y^2_s},\overline{Z}^{2}_s,\mathbb{P}_{z^2_s},G_s(k^2)))ds\right]\\
&\leq\lambda\mathbb{E}^{\widehat{\mathbb{P}}}_t\left[\int^T_t (|y^{1}_s-y^2_s|+\mathbb{E}[|y^1_s-y^2_s|^2]^{\frac{1}{2}}+(1+\mathbb{E}[|z^1_s|^2]^{\frac{\alpha}{2}}+\mathbb{E}[|z^2_s|^2]^{\frac{\alpha}{2}})\mathbb{E}[|z^1_s-z^2_s|^2]^{\frac{1}{2}}+|G_s(k^1)-G_s(k^2)|)ds\right]\\
&\leq \lambda T\left(2\|y^1-y^2\|_{\mathcal{S}^{\infty}}+\sup_{0\leq t\leq T}|k^1_t-k^2_t|\right)+\lambda\int_t^T(1+\mathbb{E}[|z^1_s|^2]^{\frac{\alpha}{2}}+\mathbb{E}[|z^2_s|^2]^{\frac{\alpha}{2}})\mathbb{E}[|z^1_s-z^2_s|^2]^{\frac{1}{2}}ds
\end{align*}
which implies that
\begin{align}\label{15q}
\|\overline{Y}^{1}-\overline{Y}^{2}\|_{\mathcal{S}^{\infty}}\leq \lambda T\left(2\|y^1-y^2\|_{\mathcal{S}^{\infty}}+\sup_{0\leq t\leq T}|k^1_t-k^2_t|\right)+\lambda \sqrt{3T+6 T^{1-\alpha}\tilde{A}^{2\alpha}}\|z^1-z^2\|_{BMO}.
\end{align}

{\it Step 2. The estimate of $\|Z^{1}-Z^{2}\|_{BMO}$.}
Note that for each $t\in[0,T]$,
\begin{align*}
\overline{Y}^{1}_t-\overline{Y}^{2}_t=\int^T_t(f^1_s-f^2_s)ds-\int^T_t(\overline{Z}^{1}_s-\overline{Z}^{2}_s)dB_s.
\end{align*}
Then applying It\^o's formula to $|\overline{Y}^{1}_t-\overline{Y}^{2}_t|^2$, we have
\[
\|Z^{1}-Z^{2}\|_{BMO}^2\leq 2\sup\limits_{\tau\in\mathcal{T}}
\left\|\mathbb{E}_{\tau}\left[\int^T_{\tau}|\overline{Y}^{1}_s-\overline{Y}^{2}_s||f^1_s-f^2_s|ds\right]\right\|_{\mathcal{L}^{\infty}}.
\]
By the Assumption ($H^q_f$), the inequalities \eqref{12q}, \eqref{13q} and \eqref{15q}, we deduce that for any $h\in (0, \delta^A]$,
\begin{align*}
&\sup\limits_{\tau\in\mathcal{T}}
\left\|\mathbb{E}_{\tau}\left[\int^T_{\tau}|\overline{Y}^{1}_s-\overline{Y}^{2}_s||f^1_s-f^2_s|ds\right]\right\|_{\mathcal{L}^{\infty}}\\
&\leq \lambda\|\overline{Y}^1-\overline{Y}^2\|_{\mathcal{S}^{\infty}}\left(2T\|y^1-y^2\|_{\mathcal{S}^{\infty}}+T\sup_{0\leq t\leq T}|k^1_t-k^2_t|\right)\\
&\quad+\lambda\|\overline{Y}^1-\overline{Y}^2\|_{\mathcal{S}^{\infty}}\sup\limits_{\tau\in\mathcal{T}}
\left\|\mathbb{E}_{\tau}\left[\int^T_{\tau}(1+|\overline{Z}^1_s|+|\overline{Z}^2_s|)|\overline{Z}^{1}_s-\overline{Z}^{2}_s|ds\right]\right\|_{\mathcal{L}^{\infty}}\\
&\quad+\lambda\|\overline{Y}^1-\overline{Y}^2\|_{\mathcal{S}^{\infty}}\sup\limits_{\tau\in\mathcal{T}}
\left\|\mathbb{E}_{\tau}\left[\int^T_{\tau}(1+\mathbb{E}[|z^1_s|^2]^{\frac{\alpha}{2}}+\mathbb{E}[|z^2_s|^2]^{\frac{\alpha}{2}})\mathbb{E}[|z^{1}_s-z^{2}_s|^2]^{\frac{1}{2}}ds\right]\right\|_{\mathcal{L}^{\infty}}\\
&\leq \lambda\|\overline{Y}^1-\overline{Y}^2\|_{\mathcal{S}^{\infty}}\left(2T\|y^1-y^2\|_{\mathcal{S}^{\infty}}+T\sup_{0\leq t\leq T}|k^1_t-k^2_t|+\sqrt{3T+6 T^{1-\alpha}\tilde{A}^{2\alpha}}\|z^1-z^2\|_{BMO}\right)\\
&\quad +\lambda\|\overline{Y}^1-\overline{Y}^2\|_{\mathcal{S}^{\infty}}\sqrt{3+6\tilde{A}^2}\|Z^{1}-Z^{2}\|_{BMO}\\
&\leq \lambda\|\overline{Y}^1-\overline{Y}^2\|_{\mathcal{S}^{\infty}}\left(2T\|y^1-y^2\|_{\mathcal{S}^{\infty}}+T\sup_{0\leq t\leq T}|k^1_t-k^2_t|+\sqrt{3T+6 T^{1-\alpha}\tilde{A}^{2\alpha}}\|z^1-z^2\|_{BMO}\right)\\
&\quad+(12+24\tilde{A}^2)\lambda^2\|\overline{Y}^1-\overline{Y}^2\|^2_{\mathcal{S}^{\infty}}+\frac{1}{4}\|Z^{1}-Z^{2}\|^2_{BMO}
\end{align*}
which together with the previous inequality implies that
\begin{equation}\label{16q}
\|Z^{1}-Z^{2}\|_{BMO} \leq2\lambda\sqrt{1+12\lambda^2+24\lambda^2\tilde{A}^2}\left(2T\|y^1-y^2\|_{\mathcal{S}^{\infty}}+T\sup_{0\leq t\leq T}|k^1_t-k^2_t|+\sqrt{3T+6 T^{1-\alpha}\tilde{A}^{2\alpha}}\|z^1-z^2\|_{BMO}\right).
\end{equation}
Meanwhile, it holds that
\begin{equation*}
K^{1}_t=Y^{1}_0-Y^{1}_t-\int_0^tf^1_sds+\int_0^tZ^{1}_sdB_s
\end{equation*}
and
\begin{equation*}
K^{2}_t=Y^{2}_0-Y^{2}_t-\int_0^tf^2_sds+\int_0^tZ^{2}_sdB_s.
\end{equation*}
Therefore, we have
\begin{align*}
\sup_{0\leq t\leq T}|K^{1}_t-K^{2}_t|&\leq |Y^{1}_0-Y^{2}_0|+\|Y^{1}-Y^{2}\|_{\mathcal{S}^{\infty}}+\sup_{0\leq t\leq T}\mathbb{E}\left[\bigg|\int_0^tf^1_sds-\int_0^tf^2_sds\bigg|\right]\\
&\leq 2\|Y^{1}-Y^{2}\|_{\mathcal{S}^{\infty}}+\mathbb{E}\left[\int_0^T|f^1_s-f^2_s|ds\right]\\
&\leq 2\|Y^{1}-Y^{2}\|_{\mathcal{S}^{\infty}}+\lambda T \left(2\|{y^{1}}-y^{2}\|_{\mathcal{S}^{\infty}}+\sup_{0\leq t\leq T}|k^1_t-k^2_t|\right)\\
  &\quad+\lambda\left(\sqrt{3+6\tilde{A}^2}\|Z^{1}-Z^{2}\|_{BMO}+\sqrt{3T+6 T^{1-\alpha}\tilde{A}^{2\alpha}}\|z^1-z^2\|_{BMO}\right).
\end{align*}
We put the estimates \eqref{15q} and \eqref{16q} into \eqref{14q} and obtain
\begin{align*}
&\|Y^1-Y^2\|_{\mathcal{S}^{\infty}}+\|Z^1-Z^2\|_{BMO}+\sup_{0\leq t\leq T}|K^{1}_t-K^{2}_t|\\
&\leq\hat{A}\lambda\left(2T\|y^1-y^2\|_{\mathcal{S}^{\infty}}+T\sup_{0\leq t\leq T}|k^1_t-k^2_t|+\sqrt{3T+6 T^{1-\alpha}\tilde{A}^{2\alpha}}\|z^1-z^2\|_{BMO}\right)
\end{align*}
where we note
\[
\hat{A}:=4+3C+2\sqrt{1+12\lambda^2+24\lambda^2\tilde{A}^2}\left(1+(1+3C)\lambda \sqrt{3+6\tilde{A}^2}\right).
\]

Now we define
\begin{equation}\label{17q}
\widehat{\delta}^{\tilde{A}}:=\min\left( \frac{1}{4\hat{A}\lambda},\frac{1}{12\hat{A}^2\lambda^2},\left(\frac{1}{24\tilde{A}^{2\alpha}}\hat{A}^2\lambda^2\right)^{\frac{1}{1-\alpha}} ,\delta^{\tilde{A}}\right),
\end{equation}
and it is straightforward to check that for any $T \in (0, \widehat{\delta}^{\tilde{A}}]$,
\begin{align*}
&\|Y^1-Y^2\|_{\mathcal{S}^{\infty}}+\|Z^1-Z^2\|_{BMO}+\sup_{0\leq t\leq T}|K^{1}_t-K^{2}_t|\\
&\leq\frac{1}{2}\left(\|y^1-y^2\|_{\mathcal{S}^{\infty}}+\|z^1-z^2\|_{BMO}+\sup_{0\leq t\leq T}|k^1_t-k^2_t|\right)
\end{align*}
which completes the proof.
\end{proof}

Then we give the proof of Theorem \ref{Thm1q}.

\medskip
\begin{proof}[Proof of Theorem \ref{Thm1q}]
We take $\tilde{A}\geq \tilde{A}_0$ and choose $\widehat{\delta}^{\tilde{A}}$ as \eqref{17q}. For $T\in (0,\widehat{\delta}^{\tilde{A}}]$, define $(Y^0,Z^0,K^0)=(0,0,0)$ and by \eqref{1q}, define $(Y^1, Z^1, K^1):=(Y^{Y^0,Z^0,K^0}, Z^{Y^0,Z^0,K^0}, K^{Y^0,Z^0,K^0})$. By recurrence, for each $i\in \mathbb{N}$, set
\begin{equation}\label{18q}
(Y^{i+1}, Z^{i+1}, K^{i+1}):=(Y^{Y^i,Z^i,K^i}, Z^{Y^i,Z^i,K^i}, K^{Y^i,Z^i,K^i}).
\end{equation}
It follows from Lemma \ref{Lem2q} that there exists $(Y,Z,K)\in \mathscr{B}_{\tilde{A}}$ such that
\begin{equation}\label{19q}
\|{Y}^{n}-{Y}\|_{\mathcal{S}^{\infty}}\longrightarrow 0, \quad\|{Z}^{n}-{Z}\|_{BMO}\longrightarrow 0 \quad {\rm and}\quad \|{K}^{n}-{K}\|_{\mathcal{S}^{\infty}}\longrightarrow 0.
\end{equation}
By a standard argument, we have for each $t\in [0, T]$, in $\mathcal{L}^2$,
$$
\int^T_t f(s, Y^{n-1}_s,\mathbb{P}_{Y^{n-1}_s} ,Z^{n}_s,\mathbb{P}_{Z^{n-1}_s},G_s(K^{n-1}))ds\longrightarrow \int^T_t f(s, Y_s,\mathbb{P}_{Y_s}, Z_s,\mathbb{P}_{Z_s}, G_s(K))ds.
$$
Thus, the triple $(Y, Z, K)$ is a solution to the mean-field BSDE (\ref{my1}) with mean reflection and nonlinear resistance and we only need to prove that the solution $(Y, Z, K)$ is ``flat''. Indeed, it is easy to check that
$$
K_T-K_t =\lim_{n\rightarrow \infty} K^n_T-K^n_t
=\lim_{n\rightarrow \infty}\sup_{t\leq s\leq T} L_s(X^n_s),$$
where
\begin{equation*}
X_t^n := \mathbb{E}_t\left[\xi+\int^T_t f(s, Y^{n-1}_s,\mathbb{P}_{Y^{n-1}_s} ,Z^{n}_s,\mathbb{P}_{Z^{n-1}_s},G_s(K^{n-1}))ds\right].
\end{equation*}
By \eqref{18q} and \eqref{19q}, we deduce
$$\|X^n-X\|_{{\mathcal{S}^{\infty}}}\longrightarrow 0,$$
where
 \begin{equation*}
X_t := \mathbb{E}_t\left[\xi+\int^T_t f(s, Y_s,\mathbb{P}_{Y_s} ,Z_s,\mathbb{P}_{Z_s},G_s(K))ds\right].
\end{equation*} Therefore, $K_T-K_t =
\sup_{t\leq s\leq T} L_s(X_s)$, which implies the ``flatness''. The uniqueness follows immediately from Lemma \ref{Lem2q}. The proof is complete.
\end{proof}
\medskip

We will now construct a global solution to equation \eqref{my1} for arbitrarily large time horizon. To this end, we first observe that any local solution $(Y,Z,K)$ on $[T-h,T]$ of the mean-field BSDE \eqref{my1} with mean reflection has a uniform estimate under an additional growth condition.
\begin{lemma}\label{Lem3q}
Assume that $(H^q_{\xi})-(H^q_f)-(H_G)-(H_{\ell})-(H_L)$ hold, $f(t,y,\mu,z,\nu,k)=f(t,y,\mu,z,\nu)$ does not depend on $k$ and satisfies that $|f(t,y,\mu,0,\nu)|\leq \hat{L}$ for some constant $\hat{L}$, and the mean-field BSDE \eqref{my1} with mean reflection has a local solution $(Y,Z,K)\in \mathcal{S}^{\infty}_{[T-h,T]}\times BMO_{[T-h,T]}\times\mathcal{A}^{\infty}_{D,[T-h,T]}$ on $[T-h, T]$ for some $0<h\leq T$.
Then there exists a constant $\overline{L}$ depending only on $C, L, \hat{L}, \lambda$ and $T$ such that
\[
\|Y\|_{\mathcal{S}^{\infty}_{[T-h,T]}}\leq \overline{L}.
\]
\end{lemma}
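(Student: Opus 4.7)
The plan is to exploit the decomposition $Y_t = \overline Y_t + (K_T - K_t)$ available because $K \in \mathcal{A}^\infty_{D,[T-h,T]}$ is deterministic. Substituting into the mean reflected equation shows that $\overline Y$ satisfies the standard (non-reflected) BSDE
\begin{align*}
\overline Y_t = \xi + \int_t^T f(s, Y_s, \mathbb{E}[Y_s], Z_s, \mathbb{E}[Z_s])\, ds - \int_t^T Z_s\, dB_s, \quad t\in[T-h,T],
\end{align*}
whose martingale integrand coincides with the original $Z$. The strategy is to bound $\overline Y$ in $\mathcal{S}^\infty_{[T-h,T]}$ via a Girsanov change of measure, then bound $K_T$ via the flat representation $K_T - K_t = \sup_{t\leq s\leq T} L_s(X_s)$, and finally combine them.

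First I would linearize the $Z$-argument of $f$. By $(H_f^q)(2)$ one can find a progressively measurable $\beta$ with $|\beta_s|\leq \lambda(1+|Z_s|)$ such that
\begin{align*}
f(s, Y_s, \mathbb{E}[Y_s], Z_s, \mathbb{E}[Z_s]) = f(s, Y_s, \mathbb{E}[Y_s], 0, \mathbb{E}[Z_s]) + Z_s \beta_s.
\end{align*}
Since $Z \in BMO_{[T-h,T]}$, the pointwise domination forces $\beta\in BMO_{[T-h,T]}$, so $\mathscr{Exp}(\beta\cdot B)$ is a true martingale by Kazamaki's criterion \cite{K} and defines an equivalent measure $\widetilde{\mathbb{P}}$ under which $\widetilde B_t := B_t - \int_{T-h}^{t}\beta_s\, ds$ is Brownian. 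Under $\widetilde{\mathbb{P}}$,
\begin{align*}
\overline Y_t = \mathbb{E}^{\widetilde{\mathbb{P}}}_t\Bigl[\xi + \int_t^T f(s, Y_s, \mathbb{E}[Y_s], 0, \mathbb{E}[Z_s])\, ds\Bigr],
\end{align*}
and the additional hypothesis $|f(s,y,\bar y,0,\bar z)|\leq L$ together with $|\xi|\leq L$ from $(H_\xi^q)$ gives the uniform bound
\begin{align*}
\|\overline Y\|_{\mathcal{S}^\infty_{[T-h,T]}}\leq L(1+T),
\end{align*}
independent of $h$ and of the particular local solution.

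The second step controls $K_T$. Taking conditional expectation in the equation for $Y$ and using that $K$ is deterministic (so $\mathbb{E}_t[K_T-K_t]=K_T-K_t$) yields $X_t := \mathbb{E}_t[\xi + \int_t^T f(s,Y_s,\mathbb{E}[Y_s],Z_s,\mathbb{E}[Z_s])\,ds] = \overline Y_t$, hence $|X_t|\leq L(1+T)$ for all $t\in[T-h,T]$. Assumption $(H_L)$ together with $|L_s(0)|\leq L$ then gives $0\leq L_s(X_s)\leq L + CL(1+T)$, whence $K_T \leq L(1+C(1+T))$. Combining with the decomposition yields
\begin{align*}
\|Y\|_{\mathcal{S}^\infty_{[T-h,T]}}\leq \|\overline Y\|_{\mathcal{S}^\infty_{[T-h,T]}} + K_T \leq L(1+T) + L\bigl(1+C(1+T)\bigr) =: \overline{L},
\end{align*}
which depends only on $C, L, \lambda, T$, as required.

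The key subtlety is the Girsanov step: one must justify that $\mathscr{Exp}(\beta\cdot B)$ is a true martingale, and this is exactly where the a priori hypothesis $Z\in BMO_{[T-h,T]}$ is consumed to produce $\beta\in BMO_{[T-h,T]}$. The other essential ingredient is the extra boundedness assumption $|f(\cdot,y,\bar y,0,\bar z)|\leq L$ (uniform in the remaining arguments), which is precisely what allows the right-hand side under $\widetilde{\mathbb{P}}$ to be controlled without invoking any bound on $Y$ itself, thereby avoiding a circular estimate.
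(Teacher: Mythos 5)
Your proof is correct, and while it follows the paper's overall architecture (the decomposition $Y_t=\overline Y_t+(K_T-K_t)$, a Girsanov linearization in $z$ to bound $\overline Y$, and the bound $L_s(X_s)\le L_s(0)+C\,\mathbb{E}[|X_s|]$ from $(H_L)$), it diverges at one genuinely useful point: you observe that $X_t=\mathbb{E}_t\bigl[\xi+\int_t^T f\,ds\bigr]$ coincides with $\overline Y_t$, so the bound $\sup_s L_s(X_s)\le L+CL(1+T)$ falls out immediately from the $\mathcal{S}^\infty$-estimate on $\overline Y$. The paper instead re-expands $\mathbb{E}[|X_s|]$ using the quadratic growth of $f$ in $z$, which produces a term $\tfrac{3}{2}\lambda\|Z\|_{BMO}^2$ and therefore forces it to first establish an a priori $BMO$ bound $\|\overline Z\|_{BMO}^2\le\overline L^2$ via \cite[Proposition 2.2]{BH1}; your shortcut removes that entire step and also yields the sharper constant $L(1+T)$ for $\overline Y$ in place of $L(1+T)e^{\lambda T}$. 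All the ingredients you use are legitimate: $\beta\in BMO_{[T-h,T]}$ because $|\beta_s|\le\lambda(1+|Z_s|)$ and $Z\in BMO_{[T-h,T]}$, the normalization $|L_t(0)|\le L$ is a standing convention of Section 4, and the extra hypothesis $|f(t,y,\bar y,0,\bar z)|\le L$ is exactly what makes the $\widetilde{\mathbb{P}}$-conditional expectation non-circular. The only caveat, shared equally by the paper's own proof, is that the identity $K_T-K_t=\sup_{t\le s\le T}L_s(X_s)$ requires the local solution to be flat (for a general deterministic solution one only has ``$\ge$''); this is harmless here since the lemma is applied to the flat local solutions produced by Theorem \ref{Thm1q}, but it would be cleaner to add the word ``flat'' to the hypothesis.
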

\begin{proof}
Note that
\[
(Y_t,Z_t)=(\overline{Y}_t+(K_T-K_t),\overline{Z}_t),  \ \forall t\in [T-h,T],
\]
where $(\overline{Y},\overline{Z})\in\mathcal{S}^{\infty}\times BMO$ is the solution to
the following BSDE on $[T-h,T]$:
\[
\overline{Y}_t=\xi+\int^T_t f(s,Y_s,\mathbb{P}_{Y_s},\overline{Z}_s,\mathbb{P}_{\overline{Z}_s})ds-\int^T_t \overline{Z}_s dB_s.
\]
Similarly to \eqref{4q}, we obtain that for each $t\in [T-h, T]$,
 \begin{align*}
\|Y_t\|_{\mathcal{L}^{\infty}} \leq \|\overline{Y}_t\|_{\mathcal{L}^{\infty}} + \sup_{t\leq s\leq T}L_s(X_s),
  \end{align*}
where
$$
 X_t := \mathbb{E}_t\left[\xi+\int^T_t f(s,Y_s,\mathbb{P}_{Y_s},\overline{Z}_s,\mathbb{P}_{\overline{Z}_s})ds\right].
$$
Therefore, we have
 \begin{align}\label{20q}
 \|Y_t\|_{\mathcal{L}^{\infty}}& \leq \|\overline{Y}_t\|_{\mathcal{L}^{\infty}}
 +(C+1)L+ C\mathbb{E}\left[\int^T_t \left(|f(s,Y_s,\mathbb{P}_{Y_s},0,\mathbb{P}_{\overline{Z}_s})|+\frac{1}{2}\lambda+\frac{3}{2}|Z_r|^2\right)dr\right]\notag\\
 &\leq   \|\overline{Y}_t\|_{\mathcal{L}^{\infty}} +(C+1)L+C\left(\hat{L}+\frac{1}{2}\lambda\right)h+\frac{3}{2}C\|Z\|_{BMO_{[T-h, T]}}^2.
 \end{align}
It follows from a similar technique as in \cite[Proposition 2.1 and Proposition 2.2]{BH1} that
\begin{align}\label{21q}
|\overline{Y}_t|\leq (L+\hat{L}T)e^{\lambda T}:=\overline{L}^1, \ \forall t\in[T-h,T],
\end{align}
and
\begin{align}\label{22q}
\|\overline{Z}\|_{BMO_{[T-h, T]}}^2\leq \frac{1\vee T}{3}\left(1+\frac{4\hat{L}}{\lambda}+2\overline{L}^1\right)e^{3\lambda \overline{L}^1}:=\overline{L}^2.
\end{align}
We derive from the inequalities \eqref{20q}, \eqref{21q} and \eqref{22q}  that for each $t\in [T-h, T]$,
\begin{equation}\label{EqL}
  \|Y_t\|_{\mathcal{L}^{\infty}}\leq \overline{L}^1+(C+1)L+C\left(L+\frac{1}{2}\lambda\right)T+\frac{3}{2}C\overline{L}^2:=\overline{L},
\end{equation}
which is the desired result.
\end{proof}

\medskip
\begin{theorem}\label{Thm2q}
Assume that $(H^q_{\xi})-(H^q_f)-(H_G)-(H_{\ell})-(H_L)$ hold and $|f(t,y,\mu,0,\nu,k)|\leq \tilde{L}$ for some constant $\tilde{L}$. Then for arbitrarily large $T$, the mean-field BSDE \eqref{my1} with mean reflection has a unique deterministic flat solution $(Y,Z,K)\in \mathcal{S}^{\infty}\times BMO\times\mathcal{A}^{\infty}_{D}$ on $[0, T]$. 
\end{theorem}
\begin{proof}
  The proof is divided into two steps.

{\it Step 1.} For given $k\in\mathcal{A}^{\infty}_{D}$, we will show that the following BSDE with mean reflection
\begin{align}\label{my3}
  \begin{cases}
  &Y_t=\xi+\int^T_t f(s,Y_s,\mathbb{P}_{Y_s},Z_s,\mathbb{P}_{Z_s},G_s(k))ds-\int^T_t Z_s dB_s+K_T-K_t;\\
  & \ \ \ \ \ \ \ \ \ \ \ \ \ \ \ \ \ \ \mathbb{E}[\ell(t,Y_t)]\geq 0,
  \end{cases} \  \  \forall t\in[0,T],
  \end{align}
  admits a unique deterministic flat solution $(Y,Z,K)\in \mathcal{S}^{\infty}\times BMO\times\mathcal{A}^{\infty}_D$. In fact, recalling $\bar{L}$ defined in \eqref{EqL} with $\hat{L}=\tilde{L}+\lambda \sup_{0\leq t\leq T}|k_t|$ and defining
   \begin{equation*}
    \overline{A}_0=(4+3C)\overline{L}+\left(1+C\lambda\right)\left(1+\frac{3\overline{L}}{\lambda}\right)e^{9\lambda \overline{L}},
    \end{equation*}
    then by Theorem \ref{Thm1q}, there exists some constant $\overline{h}>0$ depending only on $\overline{L},\lambda$ and $C$ together with $\overline{A}_0$ such that the mean-field BSDE \eqref{my3} with mean reflection admits a unique deterministic flat solution $(Y^1,Z^1,K^1)\in \mathcal{S}^{\infty}_{[T-\overline{h},T]}\times BMO_{[T-\overline{h},T]}\times\mathcal{A}^{\infty}_{D,[T-\overline{h},T]}$ on the time interval $[T-\overline{h},T]$.  Furthermore, it follows from Lemma \ref{Lem3q} that $\|Y^1\|_{\mathcal{S}^{\infty}_{[T-\overline{h},T]}}\leq \overline{L}$. Next we take $T -\overline{ h}$ as the terminal time and apply Theorem \ref{Thm1q} again to find the unique deterministic flat  solution of the mean-field BSDE \eqref{my3} with mean reflection $(Y^2,Z^2,K^2)\in \mathcal{S}^{\infty}_{[T-2\overline{h},T-\overline{h}]}\times BMO_{[T-2\overline{h},T-\overline{h}]}\times\mathcal{A}^{\infty}_{D,[T-2\overline{h},T-\overline{h}]}$ on the time interval $[T-2\overline{h},T-\overline{h}]$. Let us set
    \[
    {Y}_t=\sum\limits_{i=1}^2Y^i_t\mathbf{1}_{[T-i\overline{h},T-(i-1)\overline{h})}+Y^1_T\mathbf{1}_{\{T\}}, \  {Z}_t=\sum\limits_{i=1}^2Z^i_t\mathbf{1}_{[T-i\overline{h},T-(i-1)\overline{h})}+
    Z^1_T\mathbf{1}_{\{T\}}
    \]
    on  $[T-2\overline{h},T]$ and  ${K}_t=K^2_t$ on $[T-2\overline{h},T-\overline{h})$, ${K}_t=K^2_{T-\overline{h}}+K^1_t$ on $[T-\overline{h},T]$. One can easily check that
    $( {Y}, {Z}, {K})\in\mathcal{S}^{\infty}_{[T-2\overline{h},T]}\times BMO_{[T-2\overline{h},T]}\times\mathcal{A}^{\infty}_{D,[T-2\overline{h},T]}$ is a deterministic flat
    solution to the mean-field BSDE \eqref{my3} with mean reflection on $[T-2\overline{h},T]$. By Lemma \ref{Lem3q} again, it yields that $\| {Y}\|_{\mathcal{S}^{\infty}_{[T-2\overline{h},T]}}\leq \overline{L}$.
    
    Furthermore, we repeat this procedure so that we can build a deterministic flat
    solution  $(Y,Z,K)\in\mathcal{S}^{\infty}\times BMO\times\mathcal{A}^{\infty}_{D} $ to the mean-field BSDE \eqref{my3} with mean reflection on $[0,T]$. Moreover, it follows from Lemma \ref{Lem3q} that $\| {Y}\|_{\mathcal{S}^{\infty}}\leq \overline{L}$. The uniqueness of the global solution on $[0, T]$ is inherited from the uniqueness of local solution on each time interval.

{\it Step 2.}  For $k^i\in\mathcal{A}^2_D$, $i=1,2$, let $(Y^i,Z^i,K^i)$ be the unique deterministic flat solution to \eqref{my3} with data $k^i$. It follows from a similar argument as in the proof of Lemma \ref{Lem2q} that there exists some constant $\tilde{h}>0$ depending only on $L,\tilde{L},\lambda$ and $C$ together with $\sup_{0\leq t\leq T}|k^1_t|,\sup_{0\leq t\leq T}|k^2_t|$ such that for any $1\leq i\leq N$, with $N$ being the smallest integer satisfying $N\geq \frac{T}{\tilde{h}}$,
\begin{align*}
  \sup_{(i-1)\varepsilon\leq t\leq i\varepsilon \wedge T}|K^{1}_t-K^{2}_t|^2\leq \frac{1}{2}\left(\sup_{0\leq t\leq T} |k^1_t-k^2_t|^2\right).
  \end{align*}  
Since $K^1,K^2$ are determinsitic, we therefore have 
\begin{align*}
  \sup_{0\leq t\leq T}|K^{1}_t-K^{2}_t|^2\leq \max_{1\leq i\leq N}\left(\sup_{(i-1)\varepsilon\leq t\leq i\varepsilon \wedge T}|K^{1}_t-K^{2}_t|^2\right)\leq \frac{1}{2}\left(\sup_{0\leq t\leq T} |k^1_t-k^2_t|^2\right).
\end{align*}
Thus it follows from stanadard contraction mapping argument that mean-field BSDE \eqref{my1} with mean reflection admits a unique deterministic flat solution $(Y,Z,K)\in \mathcal{S}^{\infty}\times BMO\times\mathcal{A}^{\infty}_D$ on $[0,T]$.
\end{proof}
\section{Application to super-hedging under risk constraint}
In this section, we will prensent an application of our results for the super-hedging problem under risk constraint. We consider a financial market, where an investor can decide at time $t\in[0,T]$ to put an amount $\pi_t$ of this wealth $V_t$ in risky assets, and to allocate his consumption $K_t$, where $K$ is an increasing process with $K_0=0$. Following \cite{QX}, we consider the case that the consumption also has effect on the wealth through certain market mechanics characterized by a given mapping 
\begin{equation*}
  f:[0,T]\times\Omega\times\mathbb{R}\times\mathbb{R}^n\times\mathbb{R}_{+}\rightarrow\mathbb{R}.
\end{equation*} 
As a result, given a consumption-investment strategy $(\pi,K)$, we will assume that the associated wealth process $V^{\pi,K}$ statisfies the following stochastic differential euqation
\begin{equation*}
  dV^{\pi,K}_t=f(t,V^{\pi,K}_t,\pi_t,G(K)_t)dt-dK_t+\sigma_t\pi_tdB_t,
\end{equation*}
where $G$ is a mapping, which may depend on the whole history of the consumption process $K$. Using such consumption-investment strategies $(\pi,K)$, the investor wants to hedge a given contingent claim $\xi\in \mathcal{L}^2(\mathcal{F}_T)$. Moreover, the investor also faces regulatory rules and needs to do risk management by restricting his adimissible strategies. Thus, we call a strategy $(\pi,K)$ is admissible if and only if the associated wealth process $V$ satisfies the following constraint 
\begin{equation*}
  \rho\left(t,V^{\pi,K}_t\right)\leq q_t,~~~0\leq t\leq T,
\end{equation*}
where $\rho$ is a given risk meaure and $q$ is a benchmark. In this situation, the investor will look for the super-hedging price 
\begin{equation*}
  V_0=\inf\left\{V^{\pi,K}_0\in\mathbb{R}:~\text{there exists an adimissible strategy such that}~V^{\pi,K}_T\geq \xi~\text{and}~\rho\left(t,V^{\pi,K}_t\right)\leq q_t,~\forall t\in[0,T] \right\},
\end{equation*}
and associated admissible strategy $(\pi,K)$. 

We will first present an extension of Theorem \ref{Thm2L}, where BSDE with risk measure constraint is studied.
\begin{theorem}
  Let $\rho(t,\cdot):[0,T]\times\mathcal{L}^2\rightarrow\mathbb{R}$ be a family of monotonic and translation invariant risk meaures, which are continuous with time and satisfy 
  \begin{equation*}
    \left|\rho(t,X)-\rho(t,Y)\right|\leq C\mathbb{E}\left[|X-Y|\right],~X,Y\in\mathcal{L}^2(\mathcal{F}_t),~0\leq t\leq T.
  \end{equation*}
  Under the assumption $(H_f)-(H_G)$, if $q$ is a continuous deterministc benchmark and $\xi$ satisfies $\rho(T,\xi)\leq q_T$, then the following BSDE with risk meausure reflection 
  \begin{align*}
    \begin{cases}
    &Y_t=\xi+\int^T_t f(s,Y_s,\mathbb{P}_{Y_s},Z_s,\mathbb{P}_{Z_s},G_s(K))ds-\int^T_t Z_s dB_s+K_T-K_t;\\
    & \ \ \  \rho(t,Y_t) \leq q_t\ \ \ \ \ \ \ \ \ \ \ \ \ \ \int_0^T\left(q_t-\rho(t,Y_t)\right)dK_t=0,
    \end{cases} \  \  \forall t\in[0,T],
    \end{align*}
    admits a unique deterministic flat solution.
\end{theorem} 
\begin{proof}
  The proof follows similarly from the arguments of Lemma \ref{Lem1L}, Theorem \ref{Lem1L} and Theorem \ref{Thm2L} by replacing the map $L_t$ by $\rho(t,\cdot)-q_t$ and the monotoncity and translation invariance property of $\rho(t,\cdot)$.
\end{proof}
\begin{remark}
  We recall that for fixed $t$, a risk meaure is a map $\rho(t,\cdot):\mathcal{L}^2(\mathcal{F}_t)\rightarrow\mathbb{R}$ satisfying $\rho(t,0)=0$ and 
  \begin{itemize}
    \item Monotonicity: $X\leq Y\Rightarrow\rho(t,X)\geq \rho(t,Y)$, for $X,Y\in\mathcal{L}^2(\mathcal{F}_t)$;
    \item Translation invariance: $\rho(t,X+m)=\rho(t,X)-m$, for $X\in\mathcal{L}^2(\mathcal{F}_t)$ and $m\in\mathbb{R}$.
  \end{itemize}
  Typical examples of such risk measures are coherent risk measures (see \cite{ADEH}), for instance 
  \begin{equation*}
    \rho(t,X)=\sup\left\{\mathbb{E}^{\mathbb{Q}}[X]:\mathbb{Q}\in\mathcal{Q}_t\right\},
  \end{equation*}
  where $\mathcal{Q}_t$ is a set of absolutely continuous probabilities with respect to $\mathbb{P}$. $\rho(t,\cdot)$ is Lipschitz as soon as the set of probability densities is bounded, which is satisfied by the classical expected shortfall risk measure. 
\end{remark}
We now go back to our super-hedging problem with risk constraint. Suppose the following conditions:
\begin{itemize}
  \item super-hedging strategy exists,
  \item $q$ is a continuous determinsitic benchmark and $\xi$ satisfies $\rho(T,\xi)\leq q_T$,
  \item $f$ is Lipschitz $G$ satisfies $(H_G)$,
\end{itemize}
are satisfied, if the investor restricts himself to deterministic consumption process, relying our results, the super-hedging price $V_0$ is given by the unique deterministic flat solution to the following BSDE with risk measure reflection
\begin{align*}
  \begin{cases}
  &Y_t=\xi-\int^T_t f(s,Y_s,Z_s,,G_s(K))ds-\int^T_t Z_s dB_s+K_T-K_t;\\
  & \ \ \  \rho(t,Y_t) \leq q_t\ \ \ \ \ \ \ \ \ \ \ \ \ \ \int_0^T\left(q_t-\rho(t,Y_t)\right)dK_t=0,
  \end{cases} \  \  \forall t\in[0,T].
  \end{align*}
  Indeed, $V_0$ coincides with $Y_0$, $(\sigma^{-1}Z,K)$ is the associated strategy.

\end{document}